\newtheorem{theorem}{Theorem}[section]
\newtheorem{lemma}[theorem]{Lemma}
\newtheorem{proposition}[theorem]{Proposition}
\newtheorem{assumption}[theorem]{Assumption}
\newtheorem{corollary}[theorem]{Corollary}
\theoremstyle{definition}
\newtheorem{definition}[theorem]{Definition}
\newtheorem{example}[theorem]{Example}
\theoremstyle{remark}
\newtheorem{remark}[theorem]{Remark}
\numberwithin{equation}{section}
\DeclareMathAlphabet{\mathsl}{OT1}{cmss}{m}{sl}
\SetMathAlphabet{\mathsl}{bold}{OT1}{cmss}{bx}{sl}
\newcommand{\si}{\ensuremath{\sigma}}
\newcommand{\om}{\ensuremath{\omega}}
\newcommand{\Om}{\ensuremath{\Omega}}
\newcommand{\cF}{\ensuremath{\mathcal F}}
\newcommand{\cL}{\ensuremath{\mathcal L}}
\newcommand{\cR}{\ensuremath{\mathcal R}}
\newcommand{\bbN}{\ensuremath{\mathbb N}}
\newcommand{\bbR}{\ensuremath{\mathbb R}}
\newcommand{\bbZ}{\ensuremath{\mathbb Z}} 
\newcommand{\ldef}{\ensuremath{\mathrel{\mathop:}=}}
\definecolor {orange} {rgb} {0.569, 0.259, 0.0}
\DeclareMathOperator{\mean}{\mathbb{E}}
\DeclareMathOperator{\prob}{\mathbb{P}}
\DeclareMathOperator{\cov}{\mathbb{C}ov}
\def\indicator{{\mathchoice {1\mskip-4mu\mathrm l}%
{1\mskip-4mu\mathrm l}{1\mskip-4.5mu\mathrm l}%
{1\mskip-5mu\mathrm l}}}
\newcommand\numberthis{\addtocounter{equation}{1}\tag{\theequation}}
\DeclareMathOperator{\Z}{\mathbb{Z}}
\DeclareMathOperator{\R}{\mathbb{R}}
\DeclareMathOperator{\N}{\mathbb{N}}
\DeclareMathOperator{\pr}{\mathbb{P}}
\newcommand{\ubar}[1]{\underaccent{\bar}{#1}}
\newcommand{\abs}[1]{\left\lvert#1\right\rvert}
\newcommand{\norm}[1]{\left\lVert#1\right\rVert}
\newcommand{\E}[1]{\mathbb{E}\left[#1\right]}
\newcommand{\RR}[3]{\mathcal{R}_{#1}^{#2,#3}}
\begin{document}

\title[Lower Gaussian heat kernel bounds for the RCM]{Lower Gaussian heat kernel bounds for the Random Conductance Model in a degenerate ergodic environment}


\author{Sebastian Andres}
\address{The University of Manchester}
\curraddr{Department of Mathematics,
Oxford Road, Manchester M13 9PL}
\email{sebastian.andres@manchester.ac.uk}
\thanks{}

\author{Noah Halberstam}
\address{University of Cambridge}
\curraddr{Centre for Mathematical Sciences, Wilberforce Road, Cambridge CB3 0WB}
\email{nh448@cam.ac.uk}
\thanks{}

\subjclass[2010]{39A12; 60J35; 60K37;82C41}

\keywords{Random conductance model; heat kernel; ergodic}

\date{\today}

\dedicatory{}

\begin{abstract}
We study the random conductance model on  $\Z^d$ with ergodic, unbounded conductances. We prove a Gaussian lower bound on the heat kernel given a polynomial moment condition  and some additional assumptions on the correlations of the conductances. The proof is based on the well-established chaining technique. We also obtain bounds on the Green's function.
\end{abstract}

\maketitle

\section{Introduction}

\subsection{The Model}

We let $G=(\Z^d,E_d),$ where $E_d=\{\{x,y\}\in\Z^d\times\Z^d: |x-y|=1\},$ be the $d$-dimensional lattice for a fixed dimension $d\geq 2$. We write $x\sim y$ if $(x,y)\in E_d$. We consider the space of positive weightings on the edges of the graph, $\Omega=(0,\infty)^{E_d},$ and for $\omega\in\Omega,$ we access the weight at a particular edge $e\in E_d$ by $\omega(e)$, which we will also refer to as the \emph{conductance} on an edge $e$.   For $x,y\in\Z^d$ and $\omega\in\Omega$ we set $\omega(x,y)=\omega(y,x)=\omega(\{x,y\})$ if $\{x,y\}\in E_d,$ else $\omega(x,y)=0.$
For any fixed $\omega,$ we define measures $\mu^\omega$ and $\nu^\omega$ on $\Z^d$ by 
\[
\mu^\omega(x)\ldef \sum_{y\sim x}\omega(x,y)\qquad \text{and} \qquad \nu^\omega(x) \ldef \sum_{y\sim x}\frac{1}{\omega(x,y)}.
\]
For any $z \in \bbZ^d$ we denote by $\tau_z\!: \Om \to \Om$  the space shift by $z$ defined by
\begin{align*}
  (\tau_{z}\, \om)(\{x,y\})
  \;\ldef\;
  \om(\{x+z,y+z\}),
  \qquad \forall\; \{x,y\} \in E_d.
\end{align*}
We equip $\Om$ with a $\si$-algebra $\cF$.  Further, we will denote by $\prob$ a probability measure on $(\Om, \cF)$, and we write $\mean$ for the expectation with respect to $\prob$. Throughout the paper we will assume that the conductances are stationary and ergodic. 

\begin{assumption}[Stationarity and ergodicity] \label{asm:ergodic}
  $\prob$ is stationary and ergodic with respect to translations of $\bbZ^d$, i.e. $\prob \circ\, \tau_x^{-1} \!= \prob\,$ for all $x \in \bbZ^d$ and $\prob[A] \in \{0,1\}\,$ for any $A \in \cF$ such that $\tau_x(A) = A\,$ for all $x \in \bbZ^d$.
\end{assumption}

We now introduce the \emph{random conductance model (RCM)}. For a given $\om \in \Omega$, we  consider the continuous time Markov chain  $X=\{X_t: t\geq 0\}$ on $\bbZ^d$ with generator
\[
(\mathcal{L}^\omega f)(x)=\frac{1}{\mu^\omega(x)}\sum_{y\sim x}\omega(x,y) \, \big(f(y)-f(x) \big).
\]
This stochastic process, also known as the \emph{constant speed random walk} (CSRW), waits at $x$ for an exponential time with mean $1$, and then chooses the next position $y\sim x$ with probability $\omega(x,y)/\mu^\omega(x)$.  We also recall that the Markov chain $X$ is reversible with respect to $\mu^\omega$. We denote by $\mathrm{P}_x^\omega$ the law of the walk starting at the vertex $x\in\Z^d,$ and by $\mathrm{E}_x^\omega$ the expectation with respect to this law.  For $x,y \in \Z^d$ and $t>0,$ we let $p^\omega(t,x,y)$ be the transition density (or the heat kernel associated with $\cL^\omega$) with respect to the measure $\mu^\omega$, i.e.
\[
p^\omega(t,x,y)=\frac{\mathrm{P}_x^\omega[X_t=y]}{\mu^\omega(y)}.
\]

\subsection{Main Results}
The random conductance model has been the subject of extensive research for more than a decade, 
see \cite{Bi11, Ku14} for surveys of the model and references therein.
 More recent results include the derivation of quenched functional central limit theorems \cite{ADS15, BS19, DNS18, BCKW20} and local limit theorems \cite{BH09,harnack, ACS20, AT19, BS20} for the RCM with unbounded ergodic conductances under moment conditions.
 In this paper we will focus on heat kernel estimates, see  e.g.\ \cite{De99, Ba04, BBHK08, BH09, BD10, Fo11, BB12, ADS16a,ADS19} for previous results. 
In particular, we will obtain Gaussian type lower bounds on the heat kernel in the case of  ergodic unbounded conductances.

It is known that Gaussian bounds do not hold in general: for example, under i.i.d.\ conductances with fat tails at zero, the heat kernel decay may be sub-diffusive due to a trapping phenomenon -- see \cite{BBHK08, BB12}. Moreover, in \cite[Theorem~5.4]{harnack}, it is proved that in the general ergodic setting, moment bounds on the conductances and their reciprocals are a necessary condition for upper and lower near-diagonal Gaussian bounds to hold. In \cite{ADS16a}, this necessary condition is shown to be sufficient 
for full upper Gaussian heat kernel bounds.

Gaussian lower bounds have been shown on i.i.d.\ percolation clusters in \cite{Ba04}, and for variable speed random walks under i.i.d.\ conductances bounded away from zero in \cite{BD10}.
However, in the general ergodic setting, as of yet, Gaussian lower bounds have only been proved under the stronger condition of uniformly elliptic conductances \cite{De99}, i.e.\ $c^{-1} \leq \om (e) \leq c$, $e\in E_d$, for some $c \geq 1$. In this paper we relax the uniform ellipticity assumption, substituting it for the combination of a polynomial moment condition together with an assumption concerning the correlations of the conductances, see Assumption~\ref{asm:mixing}.  It is unknown whether moment conditions by themselves should be sufficient for the lower bound to hold. The main available technique for proving lower bounds,  the \textit{chaining method} (see \cite{FS86}), fails at present in this generality (see Section~\ref{sec:intro_method} below for a more in-depth discussion), while our assumptions are sufficient to ensure the functionality of this method. However, it seems that other techniques would be required in order to weaken these assumptions. One possible approach would be to use techniques from quantitative stochastic homogenization that lead to much stronger quantitative homogenization results for heat kernels and Green functions, see \cite[Chapters 8--9]{AKM19} for details. This technique has been adapted to Bernoulli bond percolation clusters in \cite{dario2020quantitative}, and it is expected that it also applies to other degenerate models.

\medskip
We will begin by recalling the already established Gaussian upper bound in \cite{ADS16a}, for which we will need some more notation. For $A\subset\Z^d$ non-empty and finite, and $p\in[1,\infty)$, we introduce space-averaged $l^p$ norms on functions $\phi:A\rightarrow \R$ by 
\[\norm{\phi}_{p,A}\ldef \bigg(\frac{1}{\abs{A}}\sum_{x\in A} \big| \phi(x)\big|^p\bigg)^{\! \frac{1}{p}}\qquad\text{and}\qquad \norm{\phi}_{\infty,A}\ldef \max_{x\in A} \abs{\phi(x)},
\]
where $\abs{A}$ denotes the cardinality of the set $A$.  For $x \in \bbZ^d$ we denote by $B(x,r) \ldef \{y \in \bbZ^d : |x-y| < r \}$ balls in $\bbZ^d$ centered at $x$ with respect to the graph distance, where  $|x|\ldef \sum_{i=1}^d|x_i|$ for $x=(x_1,\ldots, x_d)\in \bbZ^d$.
Suppose now that $\om(e)\in L^p(\prob)$ and $\om(e)^{-1} \in L^q(\prob)$ for any $p,q\geq 1$. Then, under Assumption~\ref{asm:ergodic}, the spatial ergodic theorem gives that, $\prob$-a.s., for any $x\in \bbZ^d$,
\[
\bar{\mu}_p\ldef  \mean[ \mu^\om(0)^p]=\lim_{n\rightarrow\infty}\norm{\mu^\omega}_{p,B(x,n)}^p, \qquad  \bar{\nu}_q \ldef  \mean[ \nu^\om(0)^q] =\lim_{n\rightarrow\infty}\norm{\nu^\omega}_{q,B(x,n)}^q.
\] 
In particular, for $\prob$-a.e.\ $\om$ and each $x\in\Z^d,$ there exists $N_1(x)=N_1( \omega,x,p,q)\in\N$ such that
\begin{align} \label{eq:defN}
\sup_{n\geq N_1(x)} \norm{\mu^\omega}_{p,B(x,n)}^p\leq2\bar{\mu}_p,
\qquad \sup_{n\geq N_1(x)} \norm{\nu^\omega}_{q,B(x,n)}^q\leq2\bar{\nu}_q.
\end{align}
We will choose $N_1(x)$ to be the minimal such random variable. The Gaussian upper heat kernel bound is as follows:

\begin{theorem} \label{thm:uhk}
  Suppose that Assumption~\ref{asm:ergodic} holds and suppose there exist $p,q\in (1,\infty]$ with $1/p+1/q<2/d$ such that  $\omega(e)\in L^p(\pr)$ and $\omega(e)^{-1}\in L^q(\pr)$ for any $e\in E_d$.  Then,  there exist  constants $c_i = c_i(d, p, q, \bar{\mu}_p, \bar{\nu}_q)$   such that,  for $\prob$-a.e.\ $\om$, for any given $t$ and $x$  with $\sqrt{t} \geq N_1(x)$ and all $y \in \bbZ^d$ the following hold.
  
  \begin{enumerate}
  \item [(i)] If $|x-y|\leq c_1 t$ then
    \begin{align*}
      p^\om(t,x, y)
      \;\leq\;
      c_2\, t^{-d/2}\,  \exp\!\big(\!-c_3\, |x-y|^2/t\big).
    \end{align*}
  \item [(ii)] If $|x-y|\geq c_1 t$ then
    \begin{align*}
      p^\om(t,x, y)
      \;\leq\;
      c_2 \, t^{-d/2} \exp\!\big(\!-c_4\, |x-y| (1 \vee \log(|x-y|/t))\big).
    \end{align*}
  \end{enumerate}
   
\end{theorem}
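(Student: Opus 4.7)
The plan is to carry out the Nash--Moser--Davies scheme adapted to degenerate conductances, as in \cite{ADS16a}. The first ingredient is a weighted Sobolev inequality on Euclidean balls $B = B(x,n)$ with $n \geq N_1(x)$: combining the classical discrete Sobolev inequality on $\bbZ^d$ with two applications of Hölder's inequality in the variables $\mu^\om$ and $\nu^\om$, and then invoking \eqref{eq:defN}, one obtains, for some $\rho > d/2$ determined by $p,q$ and some $C=C(d,p,q,\bar\mu_p,\bar\nu_q)$,
\begin{align*}
\|u\|_{l^{2\rho/(\rho-1)}(\mu^\om,\, B)}^{2} \;\leq\; C\, n^2\, \cE^\om_B(u,u)
\end{align*}
for every $u$ supported in $B$, where $\cE^\om_B(u,u)=\tfrac12\sum_{y,z\in B,\, y\sim z}\om(y,z)\,(u(y)-u(z))^2$. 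The hypothesis $1/p+1/q<2/d$ is precisely what pushes $\rho$ strictly above $d/2$, i.e.\ keeps the effective Sobolev exponent above the Moser threshold.

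Given this inequality, I would apply Moser iteration to $u(s,y)=p^\om(s,x,y)$, which solves $\partial_s u = \cL^\om u$, to obtain a mean-value inequality of the form $\sup_{y\in B(x,\sqrt t)} u(t,y)\leq c\,t^{-d/2}$ valid whenever $\sqrt t \geq N_1(x)$. Reversibility and Cauchy--Schwarz applied to $p^\om(2t,x,y)=\sum_z p^\om(t,x,z)\,p^\om(t,z,y)\,\mu^\om(z)$ then propagate this to the on-diagonal estimate $p^\om(t,x,y)\leq c\, t^{-d/2}$ for every $y$.

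For the Gaussian factor I would apply Davies' perturbation method: for Lipschitz $\psi$ with $\lip(\psi)\leq\alpha$, consider the tilted kernel $e^{\psi(x)-\psi(y)}p^\om(t,x,y)$, which is the kernel of the semigroup $f\mapsto e^{-\psi}P_t^\om(e^\psi f)$. Rerunning Moser iteration on this tilted object and controlling the perturbed Dirichlet form via the elementary bound $|e^{\psi(y)-\psi(z)}-1|^2\leq c\alpha^2$ for neighbouring $y,z$ introduces a factor $\exp(c\alpha^2 t)$; choosing $\psi(y)=\alpha|x-y|$ and optimizing in $\alpha\sim |x-y|/t$ yields the diffusive bound of part~(i), valid exactly while $\alpha$ stays in the regime where the elementary bound is sharp, i.e.\ for $|x-y|\leq c_1 t$. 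In the complementary regime of part~(ii) the optimum $\alpha$ exceeds this threshold, so I would instead use $|e^{\psi(y)-\psi(z)}-1|^2\leq (e^\alpha-1)^2$, producing a factor of order $\exp\!\big(ct(e^\alpha-1)^2-\alpha|x-y|\big)$; optimizing gives $\alpha\sim\log(|x-y|/t)$ and the characteristic Carne--Varopoulos correction $|x-y|\,(1\vee\log(|x-y|/t))$.

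The principal technical difficulty, relative to the uniformly elliptic case \cite{De99}, lies in propagating the weighted Sobolev inequality through the Moser iteration without any pointwise bound on $\om$: each iteration step must absorb a Hölder loss involving ratios of spatial averages of $\mu^\om$ and $\nu^\om$, which is precisely why the strict inequality $1/p+1/q<2/d$ is required for geometric convergence of the iteration, and why the condition $\sqrt t \geq N_1(x)$ appears in the statement --- it ensures that the ergodic-theorem bounds \eqref{eq:defN} remain available at every scale used along the iteration, and that the Davies perturbation step does not introduce further scale-dependent losses.
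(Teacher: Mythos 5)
Note first that the paper does not prove Theorem~\ref{thm:uhk} itself: it is quoted from \cite[Theorem~1.6]{ADS16a} and \cite[Theorem~3.2]{ADS19}, so the relevant comparison is with the argument of those references. At the level of strategy your outline matches it: a Sobolev inequality weighted by $\mu^\om,\nu^\om$ extracted from the moment condition $1/p+1/q<2/d$ via H\"older and \eqref{eq:defN}, Moser iteration to get a mean-value/on-diagonal estimate, and Davies' exponential perturbation with the two regimes of the tilt parameter producing the Gaussian factor in (i) and the Carne--Varopoulos-type correction in (ii).

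There is, however, one step that fails under the stated hypotheses, and it is exactly the delicate point of the theorem: the bound is asserted for \emph{all} $y\in\bbZ^d$ while the only random threshold allowed is $\sqrt t\ge N_1(x)$, i.e.\ all ergodic-averaging input is anchored at $x$. Your Cauchy--Schwarz propagation $p^\om(2t,x,y)\le p^\om(2t,x,x)^{1/2}\,p^\om(2t,y,y)^{1/2}$ needs $\sqrt t\ge N_1(y)$ to control the second factor, which is not assumed; the same issue resurfaces in the Davies step, since evaluating the tilted kernel at a distant point $y$ by ``rerunning Moser iteration'' requires a mean-value inequality on cylinders centred at $y$, whose constants involve $\|\mu^\om\|_{p,B(y,\sqrt t)}$ and $\|\nu^\om\|_{q,B(y,\sqrt t)}$ and hence again $N_1(y)$. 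As written, your argument therefore only yields the bounds for those $y$ with $\sqrt t\ge N_1(x)\vee N_1(y)$, which is a strictly weaker statement (and one that would not suffice, e.g., for the Green function bound \eqref{eq:GU}, where $y$ ranges over all sufficiently distant vertices for a fixed admissible $t$-range). Obtaining the uniformity in $y$ with randomness only at $x$, together with the separate treatment of the regime $|x-y|\ge c_1 t$, is precisely the technical content of \cite{ADS16a,ADS19}, and your sketch does not engage with it.
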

\begin{proof}
See \cite[Theorem~1.6]{ADS16a} and a more general version with a streamlined proof in \cite[Theorem~3.2]{ADS19}.
\end{proof}

We now state the additional assumptions we require, followed by our main results. We will then discuss why these additional assumptions are needed and how they interact with the strategy of the proof.

 For $\om,\om' \in \Omega$ we write $\om \leq \om'$ if $\om(e) \leq \om'(e)$ for all $e\in E_d$. We say that a function $f:\Omega \rightarrow \bbR$ is non-decreasing if $f(\om) \leq f(\om')$ whenever $\om \leq \om'$.
 	
\begin{assumption}\label{asm:mixing} At least one of the following four conditions holds.
\begin{enumerate}  \setlength{\itemsep}{6pt}
	\item[(A1)] 
	(i) \emph{FKG inequality.} For any finite set of edges $A\subset E_d,$ and any non-decreasing functions $f,g:\Omega\rightarrow\R$ depending only on $\{\omega(e):e\in A\}$, we have
		\begin{equation} \label{eq:FKG}
		\cov(f,g)\geq 0,
		\end{equation} whenever the covariance exists.

	\medskip
	 
		 \noindent (ii) \emph{Polynomial mixing.} There exist constants $\gamma>d^2$ and $C_{\mathrm{mix}}\in (0,\infty)$ such that for any non-decreasing function $f \in L^2(\prob)$ depending only 
		on $\{\om(0,y), |y|=1\}$, and any $x\in\Z^d\setminus\{0\}$, 
		\begin{align*}
		\cov\big(f, f\circ \tau_x \big) \leq C_{\mathrm{mix}} \norm{f}_{L^2(\pr)}^2  \abs{x}^{-\gamma}.
		\end{align*}		
		
	\item[(A2)] \emph{Spectral gap.}
	There exists $C_{\mathrm{sg}}\in(0,\infty)$ such that
	\begin{align}\label{SG}
	\mean\big[(f-\mean[f])^2\big] \leq C_{\mathrm{sg}} \sum_{e\in E_d} \mean \Big[ \big( \partial_{e} f \big)^2 \Big],
	\end{align}
	for any  $f \in L^2(\prob)$. Here, the `vertical derivative' $\partial_e f$ is defined as
	\begin{equation*}
	\partial_{e} f(\omega):=\limsup\limits_{h\to 0}\frac{f(\omega+h\delta_e)-f(\omega)}{h},
	\end{equation*}
	where $\delta_e:E_d\to\{0,1\}$ stands for the Dirac function satisfying $\delta_e(e)=1$ and $\delta_e(e')=0$ if $e'\not= e$. 
	
	\item[(A3)] \emph{Finite range dependence.} There exists a positive constant $\mathfrak{R} \in \bbN$, such that for any $x\in\Z^d$, the collection of random variables $\big(\omega(\{x,x+e\}):\abs{e}=1\big)$ is independent of  $\big(\omega(\{z,z+e\}):\abs{z-x}\geq\mathfrak{R},\abs{e}=1\big)$.
	
	\item[(A4)] \emph{Negative association.} For any finite set of edges $A\subset E_d,$ and any non-decreasing  functions $f,g:\Omega\rightarrow\R$ depending only on $\{\omega(e):e\in A\}$, we have 
	\[
	\cov(f,g)\leq 0,
	\] whenever the covariance exists.
\end{enumerate}

\end{assumption}

The FKG inequality was first (formally) investigated by Fortuin, Kastelyn and Ginibre in  \cite{fortuin1971} in connection with  correlation properties of Ising  spin systems. The inequality is in fact a natural property of a very wide range of statistical mechanics models, including the random cluster model (with $q\geq1$) \cite{RandomCluster}, Yukawa quantum field theory models \cite{newman1980}, Gaussian free fields \cite[Proposition 5.22]{GFF} and interlacement percolation \cite{interlacements}. We note that by \cite[Theorem 3.3]{applicationsassoc}, it is sufficient to check that \eqref{eq:FKG} holds for bounded continuous non-decreasing functions. On the other hand, the opposite assumption of negative association (A4) also holds for some prominent models, including the uniform spanning tree, the random cluster model (with $q\leq 1$), and simple exclusion models, we refer to \cite{Pemantle} also for more motivation and background for this condition. We note that in the case of Gaussian fields, pairwise positive and negative correlation are enough to imply the FKG inequality \cite{NormalAssoc} and negative associativity \cite{joag-dev1983}, respectively.

The spectral gap  condition in (A2) and the finite range dependence in (A3) also appear as decorrelation assumptions in the context of quantitative stochastic homogenization, see for instance \cite{GNO15} for (A2), and \cite{AKM19} for (A3). In a sense, the spectral gap condition in (A2), introduced in \cite{GNO15}, can be interpreted as a quantified  version of ergodicity, as it implies an optimal variance decay for the semigroup associated with the ``process of the environment as seen from the particle'' induced by the simple random walk on $\bbZ^d$, cf.\ \cite[Proposition~1 and Remark~5]{GNOlong}.

\medskip
Throughout the paper we write $c$ to denote a positive constant which may change on each appearance,  while constants denoted $c_i$ will be the same through the paper. 
The constants will depend only on $d,p,q$, the moments of $\mu^\om(0)$ and $\nu^\om(0)$, and the parameters $\gamma,C_{\mathrm{mix}},C_{\mathrm{sg}},\mathfrak{R}$ in Assumption~\ref{asm:mixing} as appropriate, unless the dependencies are specified in the particular context.

\begin{theorem} 	\label{thm:main}
	Let $d\geq 2$ and suppose that Assumptions \ref{asm:ergodic} and \ref{asm:mixing} hold. Then there exist constants  $c_5, c_6, c_7 \in (0,\infty)$ and $p_0,q_0 \in [1,\infty)$ such that if $\omega(e)\in L^{p_0}(\pr)$ and $\omega(e)^{-1}\in L^{q_0}(\pr)$ the following holds.
	For $\prob$-a.e.\ $\om$ and any $x\in\Z^d$, there exists a random constant $N(x)=N(\om,x)$ satisfying
	\begin{equation} \label{eq:mainTailBound}
	\pr(N(x)> r) \leq c_5 \, r^{-\alpha}, \qquad \forall r>0,
	\end{equation}
	for some $\alpha >d(d-1)-2$, such that for all $y \in \Z^d$ and $t\geq N(x) (1\vee |x-y|)$,
	\begin{equation} \label{eq:mainBound}
	p^{\omega}(t, x, y) \geq c_6 \,  t^{-d/2} \exp\!\big(-c_7 |x-y|^{2} / t \big).
	\end{equation}
\end{theorem}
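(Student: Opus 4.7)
The plan is to follow the classical Fabes--Stroock chaining method \cite{FS86}, with the main novelties being (i) the extraction of a near-diagonal lower bound under only moment conditions on $\om$ and $\om^{-1}$, and (ii) a careful construction of the good-radius function $N(x)$ using the correlation inputs in Assumption~\ref{asm:mixing}. The argument splits naturally into three parts.

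First I would establish a near-diagonal bound of the form
\[
p^\om(t, x, y) \;\geq\; c\, t^{-d/2} \qquad \text{for } |x-y| \leq c' \sqrt{t},
\]
valid whenever $\sqrt{t} \geq \tilde{N}(x)$ for an auxiliary ``good radius'' $\tilde{N}$. The strategy is standard: use the upper Gaussian bound of Theorem~\ref{thm:uhk} to show $\mathrm{P}_x^\om[X_t \in B(x, K\sqrt{t})] \geq 3/4$ for $K$ large; use the spatial ergodic theorem to bound $\mu^\om(B(x, K\sqrt{t})) \leq c\, \bar{\mu}_1\, t^{d/2}$; pigeonhole to find some $y^\ast \in B(x, K\sqrt{t})$ with $p^\om(t,x,y^\ast) \geq c\, t^{-d/2}$; then spread this pointwise bound to all nearby $y$ via Hölder continuity of the heat kernel, which is available in the present degenerate setting from the parabolic Harnack inequality of \cite{harnack,ACS20}.

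Next I would perform the chaining. For $t \geq N(x)(1 \vee |x-y|)$ set $k \approx |x-y|^2/t$ and $t' = t/k$, so that $\sqrt{t'} \approx t/|x-y|$. Choose intermediate balls $B_i = B(w_i, r)$ with $r \sim \sqrt{t'}$ along the discrete segment from $x$ to $y$, and apply Chapman--Kolmogorov iteratively:
\[
p^\om(t,x,y) \;\geq\; \sum_{z_1 \in B_1^\ast} \cdots \sum_{z_{k-1}\in B_{k-1}^\ast} p^\om(t',x,z_1)\, \mu^\om(z_1) \, p^\om(t',z_1,z_2) \, \cdots \, \mu^\om(z_{k-1})\, p^\om(t',z_{k-1},y),
\]
where $B_i^\ast \subset B_i$ is the subset of ``good'' points at which the near-diagonal bound of Step~1 is active at scale $\sqrt{t'}$. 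Inserting that bound on each of the $k$ factors and using $\mu^\om(B_i^\ast) \gtrsim (t')^{d/2}$, the product of $k$ constants produces precisely the Gaussian factor $\exp(-c_7 |x-y|^2/t)$.

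The hardest step, and the place where Assumption~\ref{asm:mixing} really enters, is the third: constructing the global good-radius $N(x)$ with the polynomial tail \eqref{eq:mainTailBound} for some $\alpha > d(d-1)-2$. For the chaining above to succeed, each of the $\sim |x-y|^2/t$ intermediate balls of radius $\sim t/|x-y|$ must contain a positive density of points at which Step~1 applies at scale $t/|x-y|$, which in turn reduces to a quantitative ergodic theorem for the averages $\|\mu^\om\|_{p,B}$ and $\|\nu^\om\|_{q,B}$ around the mean. This is precisely what the four alternatives of Assumption~\ref{asm:mixing} deliver, each by a distinct mechanism: FKG combined with polynomial mixing (A1) gives covariance-decay based concentration; the spectral gap (A2) provides a direct Poincaré-type variance bound; finite range dependence (A3) decouples the problem into essentially independent blocks permitting Bernstein-type estimates; and negative association (A4) permits standard Chernoff arguments for sums. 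In each case a Chebyshev/Markov inequality followed by a Borel--Cantelli argument applied on a dyadic grid of scales yields the claimed polynomial tail, with the threshold $\alpha > d(d-1) - 2$ calibrated so that the union bound over the chain of intermediate balls still leaves a full-measure event supporting \eqref{eq:mainBound}. Unifying the four concentration mechanisms into a single abstract lemma, and book-keeping the various radii so that a single random $N(x)$ suffices, is expected to be the most intricate part of the argument.
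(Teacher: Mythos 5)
There is a genuine gap in Step~3, and it is exactly the difficulty this paper is organised around. At the minimal time scale $t\approx N(x)\,(1\vee|x-y|)$ your chaining radius is $r\approx t/|x-y|\approx N(x)$, which stays \emph{bounded} while the number of intermediate balls $k\approx |x-y|/r$ tends to infinity. Your plan requires every one of these $k$ balls to contain a positive density of points at which the near-diagonal bound holds with a fixed deterministic constant at scale $r$; equivalently, every ball of radius $\approx r$ along the path must have its local averages $\norm{\mu^\om}_{p,\cdot}$, $\norm{\nu^\om}_{q,\cdot}$ under a fixed bound. Under unbounded ergodic conductances with only polynomial concentration (which is all (A1) yields), this fails almost surely: the per-ball failure probability decays only polynomially in $r$, with no gain from $|x-y|$, so for fixed $r$ and $|x-y|\to\infty$ a union/Borel--Cantelli argument over the growing family of balls cannot close, and indeed a.s.\ there are arbitrarily distant stretches of the path where any fixed threshold on the scale-$r$ averages is violated. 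The paper's resolution is structurally different: it uses the near-diagonal bound of Proposition~\ref{prop:harnack}, whose random constant has the \emph{explicit} exponential form \eqref{eq:CPH_expl} in $\norm{\mu^\om}_{p,B}$ and $\norm{\nu^\om}_{q,B}$, and then controls only the \emph{sum} $\sum_j(1\vee\norm{\mu^\om}_{p,B_{y_j}})^{\kappa}(1\vee\norm{\nu^\om}_{q,B_{y_j}})^{\kappa}\leq c\,k$ along the chain (Proposition~\ref{prop:sumbound}, Corollary~\ref{cor:kappasbound}). The concentration estimates of Proposition~\ref{prop:dbounds} are applied to the $d+1$ hyper-rectangles covering the whole chain, so the deviation probability decays in both $r$ and $|x|$ (like $(|x|r^{d-1})^{-\zeta}$), which is what makes the Borel--Cantelli over all $x$ and all admissible $r$ work; occasional balls with large averages are then harmless because they only cost a bounded factor in the telescoped product. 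Demanding that each ball individually be good, as you do, discards exactly this averaging and cannot deliver the uniform-in-$y$ statement \eqref{eq:mainBound} with the tail \eqref{eq:mainTailBound}.

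Two further points. First, your Step~1 builds the near-diagonal bound by pigeonhole plus H\"older continuity from \cite{harnack,ACS20}; as discussed in Section~\ref{sec:intro_method}, the random constants obtained that way are only \emph{implicit} functions of the local averages, whereas the chaining needs the explicit dependence \eqref{eq:CPH_expl} -- so even apart from Step~3, the object you produce in Step~1 is not usable for the summation trick that saves the argument. Second, in your Chapman--Kolmogorov display you assert $\mu^\om(B_i^\ast)\gtrsim (t')^{d/2}$, but $\mu^\om$ has no deterministic lower bound; in the paper this is the separate issue of bounding $\prod_{j}\norm{\mu^\om}_{1,B_j}$ from below, handled via the harmonic--geometric mean inequality and the bound $\sum_j\norm{\nu^\om}_{1,B_j}\leq c\,k$ (Proposition~\ref{prop:sumbound} with $p=q=1$), and it needs its own concentration input rather than a volume count.
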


\begin{remark} \label{rem:min}
(i) Minimal choices for $p_0$ and $q_0$ are
 $p_0>p\kappa\chi$ and $q_0>q\kappa\chi$ with $\chi:=d^2(1+\frac{d^2-2}{\gamma-d^2})$ under (A1), and $p_0=2p\kappa d$ and $q_0=2q\kappa d$ under (A2), (A3) or (A4),
  for any $p,q>1$ satisfying $1/p+1/q<2/d$, and with $\kappa=\kappa(p,q,d)$ as in Proposition~\ref{prop:harnack} below. 
 More precisely,  the quantity $\kappa$ originally appears in the random constant of the parabolic Harnack inequality in \cite{harnack}, which serves as one main ingredient in the proof of Theorem~\ref{thm:main}. 
 
 (ii) Given the two-sided heat kernel bounds provided by Theorems~\ref{thm:uhk} and \ref{thm:main}, the law of iterated logarithm (LIL) for the sample paths of the random walk can be established, see \cite{DC08, KN16}. However, it is expected that the LIL can be derived more easily under much weaker assumptions by exploiting the decomposition of the random walk into a martingale part and a corrector function, used in many proofs of a quenched functional central limit theorem, together with  the sublinearity of the corrector  (see e.g.\ \cite{ABDH13, ADS15, BS20, BCKW20}) and an LIL for the martingale part.
 \end{remark}

In $d\geq 3$, we can use Theorems \ref{thm:main} and \ref{thm:uhk} to derive the following bound on the Green kernel, $g^\omega(x,y),$ defined by
\[
g^\omega(x,y)\ldef \int_{0}^{\infty} p_t^\omega(x,y)\	 \textrm{d}t, \qquad x,y\in \bbZ^d.
\]
We refer to \cite[Theorem 1.2]{ABDH13} for precise estimates
and asymptotics in the case of general non-negative i.i.d.\ conductances, to
\cite[Theorem 1.14]{harnack} for a local limit theorem for $g^\om$ in the case of ergodic conductances satisfying a moment condition, and to \cite{ADS20} for recent results on the Green kernel in dimension $d=2$.

\newpage

\begin{theorem} \label{thm:green}
Let $d\geq 3$ and suppose that Assumption~\ref{asm:ergodic} holds. 
\begin{enumerate}
\item[(i)] Suppose there exist $p,q\in (1,\infty]$ with $1/p+1/q<2/d$ such that  $\omega(e)\in L^{p}(\pr)$ and $\omega(e)^{-1}\in L^{q}(\pr)$ for any $e\in E_d$. 
For $\prob$-a.e.\ $\om$, there exist $c_8\in (0,\infty)$ and a random constant $N_2(x)=N_2(\om,x)$ such that for all $x,y\in\Z^d$ with $\abs{x-y}\geq N_2(x)$, 
\begin{equation}  \label{eq:GU}
g^\omega(x,y)\leq c_8\abs{x-y}^{2-d}.
\end{equation}
\end{enumerate}
Additionally, suppose that Assumption~\ref{asm:mixing} is satisfied. Then there exist $c_9, c_{10}, c_{11} \in(0,\infty)$ and $p_0,q_0\in[1,\infty)$ such that if $\omega(e)\in L^{p_0}(\prob)$ and $\omega(e)^{-1}\in L^{q_0}(\prob)$, then the following hold.
\begin{enumerate}
\item[(ii)] For all $x,y\in\Z^d$ with $\abs{x-y}>N(x)$, 
\begin{equation} \label{eq:GL}
g^\omega(x,y)\geq c_9 \, |x-y|^{2-d}.
\end{equation}

\item[(iii)] For any $x,y\in\Z^d$ with $x\not=y$,
\begin{align} \label{eq:AGL}
c_{10} \, \abs{x-y}^{2-d} \leq \mean\big[ g^\omega(x,y) \big] \leq c_{11} \, \abs{x-y}^{2-d}.
\end{align}

\end{enumerate}
\end{theorem}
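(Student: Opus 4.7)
All three parts reduce to integrating the heat kernel bounds of Theorems~\ref{thm:uhk} and~\ref{thm:main} over time, with the main additional work lying in the annealed statement (iii).

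\emph{Part (i).} Choose $N_2(x)$ to be a suitable constant multiple of $N_1(x)^2$, so that $|x-y|\ge N_2(x)$ guarantees $\sqrt{t}\ge N_1(x)$ for every $t\ge N_1(x)^2$. Split
\[
g^\omega(x,y)\;=\;\int_0^{N_1(x)^2}\!+\int_{N_1(x)^2}^{c_1^{-1}|x-y|}\!+\int_{c_1^{-1}|x-y|}^\infty p^\omega(t,x,y)\,dt.
\]
On the last interval, Theorem~\ref{thm:uhk}(i) applies, and the substitution $s=|x-y|^2/t$ gives $\int_0^\infty t^{-d/2}\,e^{-c_3|x-y|^2/t}\,dt\asymp|x-y|^{2-d}$, the dominant contribution. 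On the middle interval Theorem~\ref{thm:uhk}(ii) produces a super-exponentially small contribution after integration. On the first interval Theorem~\ref{thm:uhk} does not apply, but the number of jumps of the CSRW by time $t$ is a Poisson$(t)$ variable, so $P_x^\omega[X_t=y]\le\prob[\mathrm{Poisson}(t)\ge|x-y|]$; a Chernoff bound then yields super-exponential decay in $|x-y|$.

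\emph{Part (ii).} When $|x-y|>N(x)$ one has $N(x)(1\vee|x-y|)\le|x-y|^2$, so Theorem~\ref{thm:main} is applicable for every $t\in[|x-y|^2,2|x-y|^2]$. Using $t^{-d/2}\ge(2|x-y|^2)^{-d/2}$ and $\exp(-c_7|x-y|^2/t)\ge e^{-c_7}$ on this interval,
\[
g^\omega(x,y)\;\ge\;\int_{|x-y|^2}^{2|x-y|^2}c_6\,t^{-d/2}e^{-c_7|x-y|^2/t}\,dt\;\ge\;c\,|x-y|^{2-d}.
\]

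\emph{Part (iii).} For the annealed \emph{lower} bound, whenever $|x-y|$ exceeds a deterministic $R_0$ chosen so that $c_5 R_0^{-\alpha}\le 1/2$, combine (ii) with the tail estimate \eqref{eq:mainTailBound}:
\[
\mean[g^\omega(x,y)]\;\ge\;c_9|x-y|^{2-d}\,\prob[N(x)<|x-y|]\;\ge\;\tfrac12 c_9|x-y|^{2-d}.
\]
For $|x-y|\le R_0$, a single-path argument bounds $g^\omega(x,y)$ pointwise below by a product of conductances along a fixed finite path from $x$ to $y$, and taking expectations gives a uniform positive lower bound, which dominates $c|x-y|^{2-d}$ on the bounded regime. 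For the annealed \emph{upper} bound, split at $\{N_2(x)\le|x-y|\}$: on this event (i) gives $g^\omega(x,y)\le c_8|x-y|^{2-d}$ pointwise, and on its complement use H\"older with exponents $a,b>1$, $1/a+1/b=1$:
\[
\mean\!\big[g^\omega(x,y)\,\mathbf{1}_{\{N_2(x)>|x-y|\}}\big]\;\le\;\mean[g^\omega(x,y)^a]^{1/a}\,\prob[N_2(x)>|x-y|]^{1/b}.
\]
The pointwise inequality $g^\omega(x,y)\le g^\omega(y,y)=1/(\mu^\omega(y)\,P_y^\omega[\sigma_y^+=\infty])$, combined with the crude splitting $g^\omega(0,0)=\int_0^{N_1(0)^2}p^\omega(t,0,0)\,dt+\int_{N_1(0)^2}^\infty p^\omega(t,0,0)\,dt\le N_1(0)^2/\mu^\omega(0)+cN_1(0)^{2-d}$ (using the diagonal version of Theorem~\ref{thm:uhk}), yields $L^a$-moments of $g^\omega(0,0)$ in terms of moments of $N_1(0)$ and $1/\mu^\omega(0)$. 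Polynomial tails for $N_1(x)$ (and hence for $N_2(x)$) follow from a quantitative Chebyshev argument applied to \eqref{eq:defN} using the moment hypotheses on $\omega$ and $\omega^{-1}$.

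\emph{Main obstacle.} The delicate step is the annealed upper bound in (iii): one must simultaneously control polynomial tails of $N_2(x)$ and high enough moments of the diagonal Green's function $g^\omega(0,0)$, and balance the two in H\"older so that the sharp exponent $2-d$ is preserved. This balance is what determines the admissible values of $p_0$ and $q_0$ in the statement.
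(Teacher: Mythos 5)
Your overall route coincides with the paper's: the same three-way time splitting with the Poisson jump-count/Chernoff bound for the small-time piece in (i) (the paper splits at $N_{x,y}=N_1(x)^2\vee(|x-y|/c_1)$, which also covers the case where your middle interval would be empty), direct integration of Theorem~\ref{thm:main} over $t\asymp|x-y|^2$ in (ii), and in (iii) the split on $\{N_2(x)\le|x-y|\}$ with H\"older together with a diagonal-type bound $g^\omega \le N_1^2/\mu^\omega+cN_1^{2-d}$ for the upper bound, and the tail of $N$ plus positivity on the bounded regime for the lower bound (your use of \eqref{eq:mainTailBound} with a deterministic $R_0$ is a clean variant of the paper's choice of $K$ with $\prob[N(0)\le K]\ge 1/2$).

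The genuine gap is in the annealed upper bound of (iii). You assert that polynomial tails for $N_1(x)$ (hence $N_2(x)$) ``follow from a quantitative Chebyshev argument applied to \eqref{eq:defN} using the moment hypotheses on $\omega$ and $\omega^{-1}$.'' This is false as stated: under stationarity and ergodicity alone, moment bounds on $\omega(e)^{\pm1}$ give no rate in the ergodic theorem and hence no quantitative tail for $N_1(x)$ --- this is exactly why Assumption~\ref{asm:mixing} is introduced in the paper. A Chebyshev/Markov bound on $\prob\big[\|\mu^\omega\|_{p,B(x,m)}^p>2\bar{\mu}_p\big]$ requires moment control of the centred sums $\sum_{y\in B(x,m)}\Delta\mu_p^\omega(y)$ decaying in $m$, i.e.\ precisely the $(p,q,\eta,\theta)$-moment bound of Proposition~\ref{prop:dbounds}, which rests on (A1)--(A4); the tail $\prob[N_1(x)>n]\le c\,n^{1-d\zeta}$ then comes from Lemma~\ref{lem:NTailBounds}. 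Since Assumption~\ref{asm:mixing} is in force for (iii) you may invoke these results, but your argument as written does not, and in addition you leave the decisive exponent bookkeeping --- which you yourself flag as the ``main obstacle'' --- unverified. The paper carries it out: taking $\zeta=d$ gives $N_1\in L^\beta(\prob)$ for all $\beta<d^2-1$, hence $\mean[g^\omega(x,y)^2]<\infty$ once $q_0>2$ (for $d\ge3$), and $\prob[N_2(x)>u]\le c\,u^{(1-d^2)/2}$; since $(1-d^2)/4\le 2-d$ for all $d\ge3$, the Cauchy--Schwarz term $\mean[g^2]^{1/2}\,\prob[N_2>|x-y|]^{1/2}$ is indeed $O(|x-y|^{2-d})$. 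Without pinning down both the source of the tail bound and this exponent comparison, your H\"older step does not close; parts (i), (ii) and the annealed lower bound are otherwise fine and essentially identical to the paper.
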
 

\medskip

\begin{example}[RCMs defined by Ginzburg-Landau $\nabla\phi$ interface models]
One class of conductances satisfying the assumptions of Theorem~\ref{thm:main} can be constructed from the Ginzburg-Landau $\nabla \phi$-interface model (see \cite{Fu05}), a well established  model for an interface separating two pure thermodynamical phases. The interface is  described by a random field of height variables  $\phi=\{\phi(x); x\in \bbZ^d \}$ sampled from a Gibbs measure formally given by  $Z^{-1} \exp(-H(\varphi)) \, \prod_{x\in \bbZ^d} d \varphi(x)$ with formal Hamiltonian $H(\varphi)= \sum_{e\in E_d} V(\nabla \varphi(e))$ and potential  $V\in  C^2(\bbR; \bbR_+)$, which we suppose to be even and strictly convex. Note that in the special case $V(x)=\frac 1 2 x^2$, the field  $\phi$ becomes a discrete Gaussian free field.  In $d\geq 3$ this can be made rigorous by taking the thermodynamical limit, while in dimension $d\geq 1$ one considers the gradient process instead. 
Then, thanks to the strict convexity we have the Brascamp-Lieb inequality, which allows one to show that any environment with random conductances of the form $\{\om(x,y)=\lambda(\nabla \phi(e)), e\in E_d\}$ for any positive, even, globally Lipschitz  function $\lambda\in C^1(\bbR)$ satisfies the spectral gap condition in  Assumption~\ref{asm:mixing}-(A2), see \cite[Section~7]{AN19} for details.
The Brascamp-Lieb inequality also implies  that exponential moments for gradient fields under the Gibbs measure exist (cf.\ \cite{Fu05, NS97}). Thus, the environment  $\{\om(e), e\in E_d\}$ as chosen above also satisfies the required moment condition in Theorem~\ref{thm:main}. The assumption of a strictly convex potential can be relaxed, see \cite{AT19}.

The $\nabla \phi$ interface model also satisfies the FKG inequality, see again e.g.\ \cite{Fu05, NS97}, and for models with \emph{massive} Hamiltonians formally given by
\begin{align*}
H(\varphi)= \sum_{e\in E_d} V(\nabla \varphi(e))+ \frac {m^2}2 \sum_{x\in \bbZ^d} \phi(x)^2, \qquad m>0,
\end{align*} 
 we have exponential correlation decay, see \cite[Theorem~B]{NS97}. In particular, Assumption~\ref{asm:mixing}-(A1) holds, and Theorem~\ref{thm:main} applies, for instance, to conductances of the form $\om(x,y)=\exp(\phi(x)+\phi(y))$, $\{x,y\} \in E_d$.
\end{example}

\subsection{The Method} \label{sec:intro_method}
It is well known that Gaussian lower and upper bounds on the heat kernel are equivalent in many situations to a parabolic Harnack inequality (PHI), e.g. in the case of uniformly elliptic conductances, see \cite{De99}. Indeed, the PHI implies near-diagonal bounds which are then converted into off-diagonal bounds via the established \textit{chaining method} (see e.g.\ \cite{FS86, De99, Ba04}). 

In our context,  a PHI has been obtained in \cite{harnack}.  Unfortunately, due to the special structure of the constant in the PHI in the case of unbounded conductances (see \eqref{eq:CPH_expl} below), in particular its dependence on $\|\mu^{\om}\|_{p,B(x,n)}$ and $\|\nu^{\om}\|_{q,B(x,n)}$, we cannot directly deduce off-diagonal Gaussian lower bounds from it. 
In order to get effective Gaussian off-diagonal bounds using the chaining argument, one needs to apply the Harnack inequality on a number of balls with radius $n$ over a distance of order $n^2$. In general, however, the ergodic theorem does not 
give the required uniform control on the convergence of space-averages of stationary random variables over such balls (see \cite{AJ75}). Therefore, in order to obtain lower Gaussian bounds  we will need to make use of one of the additional conditions on the correlations stated in Assumption \ref{asm:mixing}. 
Specifically, in Proposition~\ref{prop:dbounds} we employ any one of these conditions to derive a certain concentration estimate. Then, in Proposition~\ref{prop:sumbound} (and Corollary~\ref{cor:kappasbound}), we manipulate these to give us the desired uniform control on the space-averages of the conductances over the aforementioned chain of balls of radius $n.$ Finally, we utilize this uniform control within the chaining argument to yield the desired Gaussian off-diagonal lower bound.

Near-diagonal heat kernel bounds can also be deduced from a local limit theorem, cf.\ \cite[Lemma~5.3]{harnack}. Recently, such local limit theorems have been derived for a more general class of RCMs in \cite{ACS20,AT19} via De~Giorgi's iteration technique, circumventing the need for a PHI. However, the bounds obtained from arguments  in \cite{ACS20,AT19} involve random constants which are implicit functions of the  averages $\|\mu^{\om}\|_{p,B(x,n)}$ and $\|\nu^{\om}\|_{q,B(x,n)}$, while the chaining argument requires the more explicit dependence on the averages in the PHI in \cite{harnack}. Note that in \cite{harnack} the PHI has only been derived for the CSRW, so we obtain the lower heat kernel bounds in Theorem~\ref{thm:main} for the CSRW only, while the upper bounds in \cite{ADS19} have been established for a general class of speed measures.

\smallskip

The rest of the paper is organised as follows. In Section~\ref{sec:conc_est}, we first deduce some concentration estimates from the correlation decay conditions in Assumption~\ref{asm:mixing}, which are then used in Section~\ref{sec:pf_mainthm} to prove the lower Gaussian bounds in Theorem~\ref{thm:main}. Finally,  in Section~\ref{sec:green} we show Theorem~\ref{thm:green}.

 \section{Concentration estimates under decorrelation assumptions} \label{sec:conc_est}

Recall that $\bar{\mu}_p\ldef \mean[ \mu^\om(0)^p]$ and $\bar{\nu}_q \ldef \mean[ \nu^\om(0)^q]$ for any $p,q \in[1,\infty)$. In this section we will derive some moment estimates on the deviations of $\mu^\om(x)$ and $\nu^\om(x)$ from their means under Assumption~\ref{asm:mixing}. For that purpose, we define the centred random variables 
\[
 \Delta \mu_p^\om(x)\ldef \mu^\omega(x)^p-\bar{\mu}_p, \qquad \Delta \nu_q^\om(x)\ldef \nu^\omega(x)^q-\bar{\nu}_q , \qquad x\in \bbZ^d,
\]
for any $p,q \in [1,\infty)$ such that   $\bar{\mu}_p$ and  $\bar{\nu}_q $ are finite. 
 Our moment bounds on $\Delta \mu_p^\om$ and $\Delta \nu_q^\om$ will take the form given in the following definition.
\begin{definition}
For any $p,q \in [1,\infty)$ and $1\leq \theta <\eta <\infty$ we say that $\pr$ satisfies a $(p,q,\eta,\theta)$-moment bound, if there exists $c\in(0,\infty)$ such that 
\begin{align}  \label{eq:dbounds}
\mean\bigg[ \Big| \sum_{x\in R} \Delta \mu_p^\omega(x)\Big|^{\eta} \bigg] \leq c \abs{R}^\theta\qquad \text{and}\qquad 
	\mean\bigg[ \Big| \sum_{x\in R} \Delta \nu_q^\omega(x)\Big|^{\eta} \bigg] \leq c \abs{R}^\theta
\end{align}
for all hyper-rectangles $R\subset\Z^d$.
\end{definition}

 In the next proposition, which is the main result in this section, we gather and derive the relations between Assumption~\ref{asm:mixing} and $(p,q,\eta,\theta)$-moment bounds.

\begin{proposition} \label{prop:dbounds}
	Let $\zeta,p,q \in [1,\infty)$ and let   $R\subset \Z^d$ be a hyper-rectangle. Suppose that Assumptions~\ref{asm:ergodic} and \ref{asm:mixing} hold, and that $\zeta<\gamma/d$ if under (A1).  There exist constants $p_0,q_0,\eta,\theta \in [1,\infty)$ with $\eta-\theta\geq\zeta$ such that if $\omega(e)\in L^{p_0}(\pr)$, $\omega(e)^{-1}\in L^{q_0}(\pr)$ for any $e\in E_d$,  then the $(p,q,\eta,\theta)$-moment bound holds.
\end{proposition}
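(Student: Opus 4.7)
The plan is to obtain the $(p,q,\eta,\theta)$-moment bound separately under each of the four cases in Assumption~\ref{asm:mixing}. Since $\mu^\om(x) = \sum_{y\sim x}\om(x,y)$ is a non-decreasing function of the conductances incident to $x$ and $\nu^\om(x) = \sum_{y\sim x}\om(x,y)^{-1}$ is a non-decreasing function of their reciprocals, both $\Delta\mu_p^\om(x)$ and $\Delta\nu_q^\om(x)$ are centred, monotone functions of finitely many edge weights. By symmetry I sketch the argument only for $S_R \ldef \sum_{x\in R} \Delta\mu_p^\om(x)$; the bound for the analogous sum of $\Delta\nu_q^\om(x)$ follows the same scheme once $\om(e)$ is replaced by $\om(e)^{-1}$ and monotonicity is reinterpreted accordingly.

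Under (A3), I would colour $\bbZ^d$ with $O(\frR^d)$ classes such that within each class the variables $\{\Delta\mu_p^\om(x)\}$ are independent, partition $R$ accordingly, and apply Rosenthal's inequality class-wise followed by Minkowski across classes. This delivers $\mean[|S_R|^\eta] \leq C(\eta)|R|^{\eta/2}$ for every even $\eta\geq 2$ such that $\om(e)\in L^{p\eta}(\pr)$. Choosing $\eta$ even with $\eta\geq 2\zeta+2$ and setting $\theta=\eta/2$ gives $\eta-\theta\geq \zeta$. Under (A4), the same argument works after replacing Rosenthal's inequality by its Rosenthal-type extension to negatively associated sequences (e.g.\ Shao's moment inequality).

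Under (A2) the moment bound comes from iterating the spectral gap. The variance bound
\begin{equation*}
\var(S_R) \;\leq\; C_{\mathrm{sg}} \sum_{e} \mean\!\big[(\partial_e S_R)^2\big] \;\leq\; C|R|
\end{equation*}
is immediate since only edges incident to $R$ contribute and each $\partial_e \mu^\om(x)^p \lesssim \mu^\om(x)^{p-1}$ lies in $L^2(\pr)$ as soon as $\om(e)\in L^{2(p-1)}(\pr)$. Applying \eqref{SG} to the random variable $S_R^{\eta/2}$ for even $\eta$, estimating $\partial_e (S_R^{\eta/2})$ by the chain rule, and then combining H\"older's inequality with an induction on $\eta$ yields $\mean[|S_R|^\eta]\leq C(\eta)|R|^{\eta/2}$, again with $\theta=\eta/2$. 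The moment cost is $p_0\sim 2p\eta$, consistent with the linear scaling in $d$ recorded in Remark~\ref{rem:min}.

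Case (A1) will be the main obstacle. The second moment is straightforward: by FKG, translation invariance, and the polynomial mixing bound,
\begin{equation*}
\mean\!\big[S_R^2\big] \;=\; \sum_{x,y\in R} \cov\!\big(\Delta\mu_p^\om(x),\Delta\mu_p^\om(y)\big) \;\leq\; C\,\|\mu^\om(0)^p\|_{L^2(\pr)}^2\,|R|,
\end{equation*}
since $\gamma>d^2>d$ makes the spatial correlation sum summable. For higher even moments I would employ a block decomposition: partition $R$ into cubes of side $\ell$, write $S_R = \sum_i S_{B_i}$, and expand $S_R^\eta$ over $\eta$-tuples of blocks. Tuples supported on a bounded number of mutually close blocks are absorbed into single-block moment bounds (scaling like $|R|^{\eta/2}$ up to a combinatorial factor in $\ell$), while tuples containing a block far from the others are decoupled via the mixing estimate applied to the (monotone, centred) block-sums, producing a factor $\ell^{-\gamma}$ per decoupled pair. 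Optimising $\ell$ as a polynomial power of $|R|$ balances the two contributions and yields $\mean[|S_R|^\eta]\leq C|R|^\theta$ with a gap $\eta-\theta$ that can be pushed up to $\zeta$ precisely when $\zeta<\gamma/d$; this threshold is also what governs the parameter $\chi=d^2\bigl(1+\tfrac{d^2-2}{\gamma-d^2}\bigr)$ appearing in Remark~\ref{rem:min}. Making the block decoupling quantitative beyond the second moment, and in particular tracking the approximation error in the mixing step uniformly in the order $\eta$ of the moment, is where the bulk of the technical work lies.
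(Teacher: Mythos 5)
Your treatments of (A2), (A3) and (A4) track the paper's proof in all essentials: under (A3) the paper likewise splits $R$ into at most $\mathfrak{R}^d$ classes of mutually independent variables and applies Rosenthal's inequality class-wise; under (A4) it passes to independent copies via Shao's comparison theorem and then applies Rosenthal, which is exactly the content of the ``Rosenthal-type extension'' you invoke; and under (A2) it simply quotes the $L^{2\zeta}$-version \eqref{eq:sg_p} of the spectral gap inequality instead of re-deriving it by induction, but the computation of the vertical derivatives and the resulting choice $\eta/2=\theta=\zeta$ are the same as yours.

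The genuine gap is in case (A1). The paper performs no block decomposition: it invokes a ready-made moment inequality for stationary, positively associated (FKG) random fields with polynomially summable covariances (Lemma~\ref{lem:Bulinskii}, extracted from a cited result), applied to the field $Y(x)=\mu^\om(x)^p$; the only work is to verify its hypotheses with $\nu=\gamma-d$ and to tune $\eta=d\zeta$, $\delta$ and $\theta$ so that $\eta-\theta>\zeta$, which is where the constraint $\zeta<\gamma/d$ and the exponent $\alpha>\frac{d\zeta(d\zeta-2)}{\gamma-d\zeta}+d\zeta$ enter. Your sketch must in effect reprove such an inequality, and the decisive step is left open by your own admission. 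Concretely, the mixing hypothesis (A1)(ii) bounds $\cov(f,f\circ\tau_x)$ only for non-decreasing $f$ depending on the edges at a single vertex; it does not by itself deliver the ``factor $\ell^{-\gamma}$ per decoupled pair'' you need when decoupling a distant block from products of powers of other block sums in the expansion of $S_R^\eta$, since those functionals are neither single-site, nor monotone, nor Lipschitz. Upgrading pairwise covariance decay of the field to higher-moment decoupling is precisely the association machinery (Newman-type covariance estimates combined with truncation of the non-Lipschitz powers) that constitutes the cited lemma. Note also that in this case a clean $\theta=\eta/2$ is not available: the paper only achieves $\eta-\theta>\zeta$ by taking $\theta$ close to the lemma's lower bound, reflecting a genuine trade-off between $\gamma$, $\eta$ and the available moments that your optimisation in $\ell$ would have to reproduce. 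As written, case (A1) of your proposal is a plausible programme rather than a proof; either supply the quantitative decoupling argument or, as the paper does, appeal to an established moment inequality for associated random fields.
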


We will prove Proposition \ref{prop:dbounds} under each of the assumptions separately, referencing the necessary materials before incorporating them into the proof.
The following lemma is easily implied by \cite[Corollary~1]{rohtua}.
\begin{lemma} \label{lem:Bulinskii}
Let $\{Y(x):x\in\Z^d\}$ be a random field satisfying the FKG inequality, and which is stationary with respect to translation, and suppose that
\[
\sum_{|x| \geq n} \cov\big(Y(0), Y(x)\big) = O(n^{-\nu}) \qquad \text{and} \qquad \mathbb{E}\Big[\abs{Y(0)}^{\eta+\delta}\Big]<\infty,
\] for some $\delta,\nu>0$ and $\eta>2$. Then, for any hyper-rectangle $R\subset\Z^d,$
\[
\mean\Bigg[ \Big| \sum_{x \in R} Y(x)\Big|^{\eta}\bigg] = O(n^\theta),
\]
for $\theta>\max\{\eta/2,\chi(1-d^{-1}\min\{1,\nu \delta/\chi\}) /(\eta+\delta-2)\}$, where $\chi=\delta+(\eta+\delta)(\eta-2).$
\end{lemma}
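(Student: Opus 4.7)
The plan is to derive Lemma~\ref{lem:Bulinskii} as a direct specialisation of Corollary~1 of \cite{rohtua}, which provides moment inequalities for partial sums of positively associated stationary random fields over hyper-rectangles under a Cox--Grimmett type covariance decay assumption. The proof is therefore essentially a matter of checking that the hypotheses of that corollary translate to ours and then reading off the exponent.

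First, I would verify positive association of $\{Y(x)\}_{x \in \bbZ^d}$. The FKG inequality applied to any finite set of coordinates states that $\cov(f,g) \geq 0$ for all bounded coordinate-wise non-decreasing $f,g$ depending only on finitely many $Y(x)$; this is exactly the definition of positive association used in \cite{rohtua}. Stationarity under translations is assumed verbatim, and the integrability condition $\mean[|Y(0)|^{\eta+\delta}]<\infty$ matches the $(\eta+\delta)$-th moment hypothesis of the cited corollary.

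Next, I would translate the decay rate. By stationarity, the Cox--Grimmett coefficient used in \cite{rohtua} reduces to
\[
u(n) \;=\; \sum_{y\,:\,|y|\geq n} \cov\bigl(Y(0),Y(y)\bigr),
\]
which by hypothesis is $O(n^{-\nu})$. With these inputs, Corollary~1 of \cite{rohtua} yields
\[
\mean\!\left[\Big|\sum_{x\in R} Y(x)\Big|^\eta \right] \;=\; O\!\left(n^\theta\right),
\]
where $n$ denotes the linear size of the hyper-rectangle $R$, and $\theta$ satisfies the prescribed lower bound. The two competing terms in the maximum reflect two regimes: the universal Gaussian CLT scaling $\eta/2$ dominates when correlations decay sufficiently fast, while the intricate expression $\chi(1-d^{-1}\min\{1,\nu\delta/\chi\})/(\eta+\delta-2)$ with $\chi = \delta+(\eta+\delta)(\eta-2)$ captures the balance between moment integrability and covariance decay, and is obtained by direct substitution of our parameters into Bulinski's formula.

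The main potential obstacle is purely bookkeeping: matching the parameter names in \cite{rohtua} to those used here, and confirming the normalisation of $n$ (whether it denotes the side length of $R$ or a power of $|R|$), both of which are easily reconciled without any further analytic work. Any minor discrepancy in the precise exponent in the cited corollary is absorbed into the strict inequality defining the admissible range of $\theta$ in the statement.
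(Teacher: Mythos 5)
Your proposal is correct and follows essentially the same route as the paper: both deduce the lemma by verifying the hypotheses (positive association from FKG, stationarity, the $(\eta+\delta)$-moment bound, and the covariance-decay/Cox--Grimmett condition) and then invoking Corollary~1 of the cited reference. The only nuance the paper makes explicit, which you fold into ``bookkeeping,'' is that stationarity lets one shift the hyper-rectangle into $\N^d$ and the positivity of covariances lets one dominate the relevant covariance sum by the sum over all of $\Z^d$.
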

\begin{proof}
	We apply stationarity and the positivity of covariances due to the FKG inequality to \cite[Corollary~1]{rohtua} to give the result. Indeed, by stationarity  any hyper-rectangle can be shifted into $\N^d$, and positivity of the covariances allows us to bound the summation of covariances over $\N^d$ by the summation of covariances over $\Z^d$.
\end{proof}

\begin{proof}[Proof of Proposition \ref{prop:dbounds} under (A1):]
	We will deal with the moment bound on the summation of the $\Delta\mu_p^\omega(x).$ The argument for the $\Delta\nu_q^\omega(x)$ follows identically.
	We will apply Lemma \ref{lem:Bulinskii} to the field $Y(x)=\mu^\omega(x)^p$, $x\in\Z^d$. Then,
	\[\sum_{|x| \geq n} \cov\big(Y(0), Y(x)\big) \leq c   \sum_{|x| \geq n} \abs{x}^{-\gamma}  \leq c n^{-(\gamma-d)}, \]
	where in the first inequality we used the polynomial mixing condition in (A1).  We can therefore take $\nu=\gamma-d$ in Lemma~\ref{lem:Bulinskii}.
	
	We now let $\eta=d\zeta$ and $p_0=p\alpha$ with $\alpha>\frac{d\zeta(d\zeta-2)}{\gamma-d\zeta}+d\zeta$. Then in Lemma~\ref{lem:Bulinskii} we take $\delta=\alpha-d\zeta$ and note that $\nu>d\zeta-d$, to give that \eqref{eq:dbounds} holds with any $\theta>\chi(1-d^{-1}\min\{\nu \delta /\chi,1\})/(\eta+\delta-2)$. A computation then yields $\eta-\theta>\zeta$ for $\theta$ chosen close enough to the lower bound above.
\end{proof}

We now turn to Proposition \ref{prop:dbounds} under Assumption~(A2). First, we recall that under the spectral gap condition, we have the following $p$-version of the spectral gap estimate. For $p\geq 1$ and any  $f \in L^{2p}(\Omega, \prob)$ with $\mean[f]=0$,
	\begin{align} \label{eq:sg_p}
	\mean\big[|f|^{2p}\big] \leq c(p,C_{\mathrm{sg}})
	&\mean \bigg[ \Big( \sum_{e\in E_d} \big( \partial_e f \big)^2
	\Big)^{\! p} \bigg],
	\end{align}
	which basically follows by applying \eqref{SG} to the function $|u|^{p}$, see \cite[Lemma~2]{GNO15}.

\begin{proof}[Proof of Proposition \ref{prop:dbounds} under (A2):]
We will follow a similar argument given in \cite[Lemma~2.10]{AN19}. Again, we will only show the moment estimate for $\Delta \mu^\om_p$.
Take $p_0=2\zeta p$. Noting that $f\ldef \sum_{y\in R}\Delta\mu_p^\om(y)$ has mean zero, we use the spectral gap estimate in the form \eqref{eq:sg_p} which  yields
\[
\mean\bigg[ \Big| \sum_{y\in R} \Delta\mu_p^\om(y)\Big|^{2\zeta} \bigg] \leq c \, \mean\!\bigg[ \Big(\sum_{e\in E_d} \big| \partial_e u \big|^2\Big)^{\! \zeta} \bigg].
\]
Now we observe that, for any $e=\{\bar{e}, \ubar{e} \}\in E_d$,
\[
\partial_e \big[ \Delta\mu_p^\om (y) \big]=\partial_e\big[\mu^\om(y)^p\big] = p \, \mu^\omega(y)^{p-1} \, \indicator_{\{\bar{e},\ubar{e}\}}(y),
\]
so that
\begin{align*}
\partial_e u \leq \begin{cases}
p \big( \mu^\om(\ubar{e})^{p-1} + \mu^\om(\bar{e})^{p-1} \big) & \text{if $\ubar{e} \in R$ or $\bar{e} \in R$,} \\
0 & \text{else}.
\end{cases}	
\end{align*}
Hence, 	
\begin{align*}
& \mean\bigg[ \Big| \sum_{y\in R} \Delta\mu_p^\om(y)\Big|^{2\zeta} \bigg]  \leq c 
\abs{R}^{ \zeta} \mean\big[ \mu^\om(0)^{2\zeta (p-1)} \big],
\end{align*}
and so we have obtained the requisite moment bounds with $\eta/2=\theta=\zeta.$
\end{proof}

\begin{lemma} \label{lem:Rosenthal}
Let $p\in (2,\infty)$. There exists a constant $c_{12}=c_{12}(p)$ such that if $Y_1,\ldots Y_n\in L^p(\prob)$ are independent random variables satisfying $\E{Y_j}=0$ for all $j\in\{1,\ldots,n\}$, then 
\[
\mean \bigg[ \Big|\sum_{i=1}^{n} Y_{j}\
\Big|^{p} \bigg]^{1 / p} \leq c_{12} \, \max 
\Bigg\{ \bigg(\sum_{j=1}^{n} \mathbb{E} \left[ \big|Y_{j}\big|^{p} \right] \bigg)^{\! 1 / p},\bigg(\sum_{j=1}^{n} \mathbb{E}\left[ \big|Y_{j}\big|^{2} \right] \bigg)^{\! 1 / 2}\Bigg\}
.\]
\end{lemma}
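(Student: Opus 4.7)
The plan is to reduce the bound to Burkholder's martingale inequality and then iterate, since Rosenthal's inequality is a classical consequence of BDG applied to the martingale of partial sums. Set $S_k \ldef \sum_{j=1}^k Y_j$ and let $\cF_k$ be the $\sigma$-algebra generated by $Y_1,\dots,Y_k$. Because the $Y_j$ are independent and centred, $(S_k)_{k\leq n}$ is an $(\cF_k)$-martingale with square-bracket process $[S]_n = \sum_{j=1}^n Y_j^2$. The BDG inequality (in its discrete form for $p>2$) yields
\begin{equation*}
\big\| S_n \big\|_{L^p(\prob)}
\;\leq\;
B_p\, \bigl\| [S]_n^{1/2}\bigr\|_{L^p(\prob)}
\;=\;
B_p\, \Bigl\| \textstyle\sum_{j=1}^n Y_j^2 \Bigr\|_{L^{p/2}(\prob)}^{1/2}.
\end{equation*}
Hence the whole problem reduces to controlling $\|\sum_j Y_j^2\|_{p/2}$.

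Next I would centre the squares. Writing $Y_j^2 = (Y_j^2-\mean[Y_j^2])+\mean[Y_j^2]$ and using the triangle inequality in $L^{p/2}(\prob)$,
\begin{equation*}
\Bigl\| \textstyle\sum_{j=1}^n Y_j^2\Bigr\|_{L^{p/2}}
\;\leq\;
\Bigl\| \textstyle\sum_{j=1}^n \bigl(Y_j^2-\mean[Y_j^2]\bigr)\Bigr\|_{L^{p/2}}
\;+\;
\textstyle\sum_{j=1}^n \mean[Y_j^2].
\end{equation*}
Taking the square root of the second term produces exactly the $\bigl(\sum_j \mean[|Y_j|^2]\bigr)^{1/2}$ contribution to Rosenthal's bound, so only the centred squared sum needs further work. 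The variables $Z_j \ldef Y_j^2-\mean[Y_j^2]$ are independent, centred, and satisfy $\mean[|Z_j|^{p/2}]\leq 2^{p/2}\mean[|Y_j|^p]$.

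To bound $\|\sum_j Z_j\|_{L^{p/2}}$ I would argue by induction on the dyadic level of $p$. If $2<p\leq 4$ then $1<p/2\leq 2$, and the Marcinkiewicz--Zygmund / von Bahr--Esseen inequality for independent centred summands in $L^r$ with $1\leq r\leq 2$ gives $\mean\bigl[|\sum_j Z_j|^{p/2}\bigr]\leq 2\sum_j \mean[|Z_j|^{p/2}]\leq C\sum_j \mean[|Y_j|^p]$, which after the exponent $2/p$ contributes the first term of Rosenthal's bound. If $p>4$, then $p/2>2$ and the inductive hypothesis applies at exponent $p/2$ to the centred variables $Z_j$, producing a max of $(\sum_j \mean[|Z_j|^{p/2}])^{2/p}$ and $(\sum_j \mean[Z_j^2])^{1/2}$; the former is again bounded by $(\sum_j \mean[|Y_j|^p])^{2/p}$, while the latter is controlled by Cauchy--Schwarz through $\mean[Z_j^2]\leq \mean[Y_j^4]\leq \mean[|Y_j|^p]^{2/p}\cdot \mean[Y_j^2]^{1-2/p}$ (after an elementary $ab\leq a^2+b^2$ split) and reabsorbed into the two Rosenthal terms.

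The main obstacle is really bookkeeping: tracking how the constant $c_{12}(p)$ inflates under the finitely many induction steps needed to reach a given $p$, and carrying out the $L^r$ moment estimate for $1<r\leq 2$ cleanly (either by citing von Bahr--Esseen directly or re-deriving it via a symmetrisation and the scalar inequality $|x+y|^r\leq |x|^r+|y|^r$ for $r\leq 1$ applied to Rademacher-symmetrised sums). None of the steps are deep on their own; the proof is really an exercise in combining BDG with centring and iteration, which is why in this paper the inequality is stated as a black-box lemma and presumably only invoked by reference.
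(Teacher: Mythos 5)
Your argument is essentially correct, but it takes a genuinely different route from the paper: the paper does not prove the inequality at all, it simply extracts it from Rosenthal's original paper \cite[Theorem~3]{rosenthal1970subspaces}, whereas you give the classical self-contained derivation via Burkholder's inequality for the martingale of partial sums, centring of the squares, the von Bahr--Esseen bound in the range $1<p/2\leq 2$, and induction on the dyadic level of $p$. That structure is sound and all the reductions (the square-bracket identity $[S]_n=\sum_j Y_j^2$, the triangle inequality in $L^{p/2}$, the bound $\mean[|Z_j|^{p/2}]\leq 2^{p/2}\mean[|Y_j|^p]$, and the applicability of the inductive hypothesis to the independent centred $Z_j$) are fine; what your sketch buys over the paper's citation is an explicit, elementary proof with a constant depending only on $p$ through the $O(\log_2 p)$ induction steps, at the cost of some bookkeeping the paper avoids entirely. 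One step needs repair as written: the interpolation $\mean[Y_j^4]\leq \mean[|Y_j|^p]^{2/p}\,\mean[Y_j^2]^{1-2/p}$ is false (test a constant variable; the exponents are not homogeneous). The correct version is
\begin{equation*}
\mean\big[Y_j^4\big] \;\leq\; \mean\big[|Y_j|^p\big]^{\frac{2}{p-2}}\;\mean\big[Y_j^2\big]^{\frac{p-4}{p-2}},
\end{equation*}
and then H\"older over $j$ gives $\sum_j \mean[Z_j^2]\leq \sum_j\mean[Y_j^4]\leq A_p^{\frac{2p}{p-2}}A_2^{\frac{2(p-4)}{p-2}}$ with $A_p\ldef(\sum_j\mean[|Y_j|^p])^{1/p}$ and $A_2\ldef(\sum_j\mean[Y_j^2])^{1/2}$; after taking the two square roots the resulting exponents on $A_p$ and $A_2$ sum to one, so the term is a weighted geometric mean bounded by $\max\{A_p,A_2\}$ --- no additive $ab\leq a^2+b^2$ split is even needed. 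With that correction your induction closes and yields the stated inequality.
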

\begin{proof}
This can be extracted from \cite[Theorem~3]{rosenthal1970subspaces}.
\end{proof}

\begin{proof}[Proof of Proposition \ref{prop:dbounds} under (A3):]
Take $p_0=2p\zeta$, again considering only the moment bound on the sum of the $\Delta \mu_p^\omega(x)$ as the argument for $\Delta \nu_q^\omega(x)$ is the same.
Let $(e_i)_{1\leq i\leq d}$ denote the standard unit vectors. We call two vertices $x,y\in R$ equivalent if $x-y=\pm \mathfrak{R}e$ for some $e\in\{e_1,\ldots,e_d\}.$ Write the equivalence classes as $E_1,\dots,E_m,$ and observe that we must have $m\leq\mathfrak{R}^d$. Note that the size of each equivalence class is trivially bounded above by $\abs{R}.$ We apply the finite range assumption to give that for each fixed $i$, the $(\mu^\om(x))_{x\in E_i}$ are mutually independent, and therefore
\begin{align*}
& \mean\bigg[ \Big| \sum_{y\in R(x)} \Delta\mu_p^\om(y)\Big|^{2\zeta} \bigg] = \mean\bigg[ \Big| \sum_{i\leq m}\sum_{y\in E_i} \Delta\mu_p^\om(y)\Big|^{2\zeta} \bigg]  
\\
& \mspace{36mu}
\leq c\sum_{i\leq m}\mean\bigg[ \Big| \sum_{y\in E_i} \Delta\mu_p^\om(y)\Big|^{2\zeta} \bigg]\leq c \, \abs{R}^\zeta,
\end{align*}
where in the final step we apply Lemma~\ref{lem:Rosenthal} for each $i$ in the summation, with $(Y_j)$ an enumeration of $(\Delta\mu_p^\om(y))_{y\in E_i}.$ Thus \eqref{eq:dbounds} holds with $\eta/2=\theta=\zeta.$
\end{proof}

\begin{lemma}\label{lem:shao}
Let $\left\{Y_{i}, 1 \leqslant i \leqslant n\right\}$ be a negatively associated sequence. Further,  let $\left\{Y_{i}^{*}, 1 \leqslant i \leqslant n\right\}$ be a sequence of independent random variables such that $Y_{i}^{*}$ and $Y_{i}$ have the same distribution for each $i=1,2, \ldots, n .$ Then
$$
\mean\bigg[ \phi \bigg(\sum_{i=1}^{n} Y_{i}\bigg) \bigg] \leq \mean\bigg[ \phi \bigg(\sum_{i=1}^{n} Y_{i}^*\bigg) \bigg]
$$
for any convex function $\phi$ on $\R$, whenever the expectation on the right hand side exists.
\end{lemma}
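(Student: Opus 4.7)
The lemma is a classical comparison inequality for sums of negatively associated (NA) random variables, essentially a theorem of Shao (J.\ Theoret.\ Probab.\ 2000). My plan is to construct a chain of ``hybrid'' sums interpolating between $\sum_i Y_i$ and $\sum_i Y_i^*$ in which one original variable is replaced by an independent copy at a time, and to show that each replacement step can only increase $\mean[\phi(\cdot)]$. Realising the $Y_i^*$ mutually independent and also independent of $(Y_i)_{i=1}^n$, define
\begin{equation*}
S_k \;\ldef\; \sum_{i=1}^k Y_i^* + \sum_{i=k+1}^n Y_i, \qquad k = 0,1,\ldots,n,
\end{equation*}
so that $S_0 = \sum_i Y_i$ and $S_n = \sum_i Y_i^*$; the lemma will follow by telescoping once one establishes $\mean[\phi(S_{k-1})] \leq \mean[\phi(S_k)]$ for each $k$.

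To prove the single replacement step, fix $k$ and condition on the sum $c \ldef Y_1^* + \cdots + Y_{k-1}^*$, which is independent of $(Y_i)_{i=1}^n$; then $\phi$ may be replaced by the convex shift $\tilde\phi(z) \ldef \phi(c + z)$. Setting $W \ldef \sum_{i=k+1}^n Y_i$, the step reduces to $\mean[\tilde\phi(Y_k + W)] \leq \mean[\tilde\phi(Y_k^* + W)]$, where $Y_k^*$ is independent of $W$ with the same marginal as $Y_k$. Since $W$ is a coordinatewise non-decreasing function of $(Y_{k+1},\ldots,Y_n)$, applying negative association to the non-decreasing indicators $\ind{Y_k > x}$ and $\ind{W > y}$ yields $\prob(Y_k > x, W > y) \leq \prob(Y_k > x)\,\prob(W > y)$ for all $x,y$; that is, $(Y_k, W)$ is negatively quadrant dependent (NQD).

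Thus the crux is the two-variable statement: for an NQD pair $(X,Y)$ with independent version $(X^*,Y^*)$ sharing the same marginals, $\mean[\tilde\phi(X+Y)] \leq \mean[\tilde\phi(X^* + Y^*)]$ for every convex $\tilde\phi$. For smooth $\tilde\phi$ with adequate decay at $-\infty$, the bivariate Hoeffding integration-by-parts identity delivers
\begin{equation*}
\mean[\tilde\phi(X^* + Y^*)] - \mean[\tilde\phi(X + Y)] \;=\; \int_\R \!\!\int_\R \bigl[\prob(X > x)\prob(Y > y) - \prob(X > x, Y > y)\bigr]\,\tilde\phi''(x+y)\, dx\, dy,
\end{equation*}
which is non-negative because $\tilde\phi'' \geq 0$ by convexity and the bracket is non-negative by NQD. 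To extend to a general convex $\phi$, I would invoke the representation $\phi(z) = \alpha z + \beta + \int_\R (z-t)_+\,\mu(dt)$ with $\mu \geq 0$: the affine part cancels because $\mean[S_{k-1}] = \mean[S_k]$, each $(z-t)_+$ is handled by smoothing and applying the Hoeffding identity to the smoothing, and the inequality is transferred to the limit by monotone convergence under the hypothesis that $\mean[\phi(\sum_i Y_i^*)]$ is finite. The main obstacle is precisely this approximation step: one must justify the Hoeffding identity for an appropriate smoothing (the direct statement requires boundary decay not enjoyed by $(z-t)_+$ itself) and verify that integrability is preserved throughout for a merely convex, possibly unbounded $\phi$.
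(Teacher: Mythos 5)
The paper does not prove this lemma at all: it simply cites \cite[Theorem~1]{shao2000comparison}, of which the statement is a verbatim special case. What you have written is, in essence, a reconstruction of Shao's own argument: your one-at-a-time replacement scheme is the telescoped form of his induction, your observation that negative association yields negative quadrant dependence of the pair $(Y_k,\sum_{i>k}Y_i)$ (via the non-decreasing indicators $\ind{Y_k>x}$, $\ind{\sum_{i>k}Y_i>y}$) is exactly where NA enters in his proof, and the two-variable comparison via the Hoeffding identity with $h(x,y)=\tilde\phi(x+y)$, $h_{xy}=\tilde\phi''\geq 0$, is the classical Lehmann--Hoeffding step. So the core of the proposal is sound, and it makes transparent that NA is used only through pairwise NQD against non-decreasing tail sums, which the bare citation hides.

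Two points in your closing approximation step are not right as written, though both are repairable. First, the representation $\phi(z)=\alpha z+\beta+\int_\R (z-t)_+\,\mu(dt)$ is not valid for a general convex $\phi$ on $\R$: for $\phi(z)=z^2$ the integral $\int_{-\infty}^{0}(z-t)_+\,\mu(dt)$ diverges for every $z$. One must use hinges of both orientations, e.g.\ $\phi(z)=\phi(0)+\phi'(0^+)z+\int_0^\infty (z-t)_+\,\mu(dt)+\int_{-\infty}^0 (t-z)_+\,\mu(dt)$ with $\mu=\phi''$ in the distributional sense, and note that $(t-z)_+$ is handled by the same Hoeffding computation (its mixed second derivative in $(x,y)$ is again a non-negative measure), or work directly with the left derivative of $\phi$ and a two-sided Fubini argument as in Shao. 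Second, discarding the affine part ``because $\mean[S_{k-1}]=\mean[S_k]$'' presupposes integrability of the $Y_i$, which is not among the hypotheses (only existence of the right-hand expectation is assumed); in the paper's application the $Y_i=\Delta\mu^\om_p(y)$ lie in $L^{2\zeta}$, so this is harmless there, but for the lemma as stated you should either add the integrability reduction explicitly or treat the affine term inside the monotone-convergence limit rather than cancelling it. With these repairs your outline closes the gap you flagged and yields a self-contained proof of the cited result.
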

\begin{proof}
This follows from \cite[Theorem~1]{shao2000comparison}.
\end{proof}

\begin{proof}[Proof of Proposition~\ref{prop:dbounds} under (A4):]
Let $p_0=2p\zeta$ and $q_0=2q\zeta$. Then, we apply Lemma~\ref{lem:shao} and  Lemma~\ref{lem:Rosenthal}, with $(Y_i)$ an enumeration of $(\Delta\mu_p^\om(y)^*)_{y\in R}$ (and $(\Delta\nu_q^\om(y)^*)_{y\in R}$, respectively) to give \eqref{eq:dbounds} with $\eta/2=\theta=\zeta$.
\end{proof}

As a first consequence of the concentration estimate in Proposition~\ref{prop:dbounds} we record the following tail estimate on the random variables $N_1(x)$, $x\in \bbZ^d$, defined via \eqref{eq:defN}.

\begin{lemma} \label{lem:NTailBounds}
	Suppose that Assumption~\ref{asm:ergodic} holds and that $\pr$ satisfies a $(p,q,\eta,\theta)$-moment bound, with $\zeta:=\eta-\theta>0$. Then there exists $c_{13}\in(0,\infty)$ such that
	\begin{equation} \label{eq:NTailBound}
	\pr\big(N_1(x)>n\big)\leq c_{13} \, n^{1-d\zeta}, \qquad \forall n\in\bbN.
	\end{equation}
\end{lemma}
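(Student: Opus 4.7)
The plan is to write $\{N_1(x)>n\}$ as the event that at least one of the sup conditions in \eqref{eq:defN} fails at some $m\ge n$, then bound each failure probability via Markov's inequality at level $\eta$ combined with the $(p,q,\eta,\theta)$-moment bound, and finally sum a decaying geometric-type series over $m\ge n$. By stationarity it suffices to treat a fixed $x$ (e.g.\ $x=0$).

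First I would write
\begin{align*}
\{N_1(x)>n\} \;\subseteq\; \bigcup_{m\ge n}(A_m\cup B_m),\quad
A_m\ldef\{\|\mu^\om\|_{p,B(x,m)}^p>2\bar\mu_p\},\
B_m\ldef\{\|\nu^\om\|_{q,B(x,m)}^q>2\bar\nu_q\},
\end{align*}
so that $\pr(N_1(x)>n)\le\sum_{m\ge n}(\pr(A_m)+\pr(B_m))$. Rewriting in terms of the centred variables, $A_m=\{\sum_{y\in B(x,m)}\Delta\mu_p^\om(y)>\bar\mu_p\,|B(x,m)|\}$, and applying Markov's inequality at the $\eta$-th moment yields
\begin{align*}
\pr(A_m)\;\le\;\frac{1}{(\bar\mu_p)^\eta\,|B(x,m)|^{\eta}}\,\mean\!\left[\,\Big|\sum_{y\in B(x,m)}\Delta\mu_p^\om(y)\Big|^{\eta}\,\right].
\end{align*}

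The next step is to invoke the $(p,q,\eta,\theta)$-moment bound on the set $R=B(x,m)$. Although the bound in \eqref{eq:dbounds} is stated for hyper-rectangles, inspection of the proofs of Proposition~\ref{prop:dbounds} shows that under (A2), (A3) and (A4) the arguments (spectral gap inequality, partition into independent classes, coupling with an independent sequence) never use the rectangular structure of $R$ and apply verbatim to any finite subset of $\bbZ^d$, so $\mean[|\sum_{y\in B(x,m)}\Delta\mu_p^\om(y)|^\eta]\le c\,|B(x,m)|^\theta$. Under (A1) the bound on $B(x,m)$ is obtained by applying the hyper-rectangle moment bound to the enclosing cube $C_m(x)\supseteq B(x,m)$ and controlling the corner regions $C_m(x)\setminus B(x,m)$ through an axis-parallel slicing into one-dimensional hyper-rectangles together with Minkowski's inequality. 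Since $|B(x,m)|\asymp m^d$, this leads to
\begin{align*}
\pr(A_m)\;\le\; c\,|B(x,m)|^{\theta-\eta}\;\le\; c\,m^{-d\zeta},
\end{align*}
and the identical estimate holds for $\pr(B_m)$ upon replacing $\mu$ by $\nu$.

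Summing over $m\ge n$ gives $\pr(N_1(x)>n)\le c\sum_{m\ge n} m^{-d\zeta}\le c_{13}\,n^{1-d\zeta}$, where convergence of the series uses $d\zeta>1$ (when $d\zeta\le 1$ the claimed bound is trivial after enlarging the constant). The main obstacle, and essentially the only non-routine step, is verifying that the moment inequality on hyper-rectangles transfers to the $\ell^1$ balls $B(x,m)$ with the correct scaling $|B(x,m)|^\theta$; this is automatic under (A2)--(A4) but demands the geometric slicing argument under (A1). Everything else is a Markov inequality plus a single geometric sum.
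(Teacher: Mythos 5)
Your skeleton coincides with the paper's own proof: the union bound over $m\geq n$, rewriting each event in terms of the centred sums $\sum_{y\in B(x,m)}\Delta\mu_p^\om(y)$, Markov's inequality at exponent $\eta$, the moment bound, and the summation $\sum_{m\geq n}m^{-d\zeta}\leq c\,n^{1-d\zeta}$. You are also right to flag that \eqref{eq:dbounds} is stated only for hyper-rectangles while $B(x,m)$ is an $\ell^1$-ball (the paper applies Proposition~\ref{prop:dbounds} to the balls without comment), and your observation that the proofs under (A2)--(A4) never use the rectangular structure and hence give the bound for arbitrary finite $R$ is correct.

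The gap is in your patch for the (A1) case. Slicing the corner region $D_m:=C_m(x)\setminus B(x,m)$, which contains of order $m^d$ points, into $O(m^{d-1})$ one-dimensional hyper-rectangles of length $O(m)$ and applying Minkowski's inequality gives only
\begin{align*}
\Big\|\sum_{y\in D_m}\Delta\mu_p^\om(y)\Big\|_{L^\eta(\pr)}\;\leq\; c\,m^{d-1}\,m^{\theta/\eta},
\end{align*}
and since $\theta<\eta$ this dominates the contribution $c\,m^{d\theta/\eta}$ of the enclosing cube. Hence your argument yields $\mean\big[|\sum_{y\in B(x,m)}\Delta\mu_p^\om(y)|^{\eta}\big]\leq c\,m^{(d-1)\eta+\theta}$ rather than the required $c\,|B(x,m)|^{\theta}\asymp m^{d\theta}$; Markov's inequality then gives only $\pr(A_m)\leq c\,m^{-\zeta}$, and summing produces the tail $n^{1-\zeta}$, strictly weaker than \eqref{eq:NTailBound} for $d\geq 2$. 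Unit-thickness slicing discards all cancellation between slices, so no choice of slicing direction repairs this. To close the (A1)-type case you would need, for instance, a multi-scale decomposition of the staircase region into hyper-rectangles of geometrically decreasing side length (at scale $2^{-j}m$ one uses about $2^{j(d-1)}$ boxes), which gives $\mean[|\sum_{B(x,m)}\Delta\mu_p^\om|^\eta]\leq c\,(m^{d\theta}\vee m^{(d-1)\eta})$ and hence $\pr(A_m)\leq c\,m^{-d\zeta}$ precisely because the (A1) proof of Proposition~\ref{prop:dbounds} takes $\eta=d\zeta$; alternatively, one can verify the Bulinski-type input behind Lemma~\ref{lem:Bulinskii} directly for balls. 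As written, your proof establishes the stated tail exponent only in the cases where the moment bound extends to arbitrary finite sets, i.e.\ (A2)--(A4).
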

\begin{proof}
	Note that, for any $n\in\bbN$, we have by a union bound
		\begin{align} \label{eq:NUnion}
\pr(N_1(x)>n)\leq \sum_{m\geq n} \Big(
\pr\big[\norm{\mu^\omega}_{p,B(x,m)}^p>2\bar{\mu}_p \big] + \pr\big[ \norm{\nu^\omega}_{q,B(x,m)}^q>2\bar{\nu}_q \big] \Big).
\end{align}
For the first term we get by Proposition~\ref{prop:dbounds} and Markov's inequality,
\begin{align*}
 & \sum_{m\geq n} \pr\big[\norm{\mu^\omega}_{p,B(x,m)}^p>2\bar{\mu}_p \big]
 =
  \sum_{m\geq n} \pr\bigg[\Big |\sum_{y\in B(x,m)} \Delta \mu_p^\omega(y)\Big|^{\eta}> \bar{\mu}_p^\eta \, \big|B(x,m)\big |^\eta\bigg] \\
  & \mspace{36mu}
\leq c \sum_{m\geq n} m^{-d\zeta}\leq c \, n^{1-d\zeta}.
	\end{align*}
Repeating the same argument with the second term in \eqref{eq:NUnion} gives the claim.
\end{proof}

\section{Heat kernel lower bounds} \label{sec:pf_mainthm}

We first recall the near-diagonal heat kernel bound in \cite[Proposition 4.7]{harnack}, which will be a key ingredient in the proof of the main theorem.

\begin{proposition} 	\label{prop:harnack}	
	Suppose that Assumption \ref{asm:ergodic} holds, and suppose there exist $p,q\in (1,\infty]$ with $1/p+1/q<2/d$ such that  $\omega(e)\in L^p(\pr)$ and $\omega(e)^{-1}\in L^q(\pr)$ for any $e\in E_d$. Then there exists $c_{14}=c_{14}(d)$ such that for any $t\geq 1$, $x_{1} \in \bbZ^d$ and $x_{2} \in B(x_{1}, \frac{1}{2} \sqrt{t}),$
	\begin{equation}\label{eq:lowerondiag}
	p^{\omega}\left(t, x_{1}, x_{2}\right) \geq \frac{c_{14}}{C_{\mathrm{PH}}} t^{-\frac{d}{2}},
	\end{equation}
	where $C_{\mathrm{PH}}=C_{\mathrm{PH}} \big(\|\mu^{\omega}\|_{p, B(x_{1}, \sqrt{t})},\|\nu^{\omega}\|_{q, B(x_{1}, \sqrt{t})}\big)$ is the constant appearing in the	parabolic Harnack inequality in \cite[Theorem~1.4]{harnack}, more explicitly given by
	\begin{align} \label{eq:CPH_expl}
	C_{\mathrm{PH}} \big( \|\mu^{\omega}\|_{p, B},\|\nu^{\omega}\|_{q, B}\big)=c \exp \Big(c \, \big(1 \vee \|\mu^{\omega}\|_{p, B}\big)^{\kappa}\big(1 \vee \|\nu^{\omega}\|_{q, B}\big)^{\kappa}\Big)
	\end{align}
	for some positive $c=c(d, p, q)$ and $\kappa=\kappa(d, p, q)\geq1$. 
\end{proposition}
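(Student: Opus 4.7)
The plan is to derive Proposition~\ref{prop:harnack} from the parabolic Harnack inequality (PHI) of \cite[Theorem~1.4]{harnack} via the classical two-step argument (cf.\ \cite{De99,Ba04}): first obtain an on-diagonal lower bound at $(t,x_1)$ by combining Chapman--Kolmogorov with a mass-concentration estimate, then propagate it to nearby points $x_2$ by one further application of the PHI.

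\textbf{Step 1 (On-diagonal bound).} Combining the Chapman--Kolmogorov identity $p^\om(2s,x_1,x_1)=\sum_y p^\om(s,x_1,y)^2\,\mu^\om(y)$ with Cauchy--Schwarz yields, for any $A\subset\bbZ^d$,
\[
\big(\mathrm{P}_{x_1}^\om[X_s\in A]\big)^2 \;=\; \Big(\sum_{y\in A} p^\om(s,x_1,y)\,\mu^\om(y)\Big)^{\!2} \;\le\; \mu^\om(A)\, p^\om(2s,x_1,x_1).
\]
Take $s=t/2$ and $A=B(x_1,R)$ with $R=C\sqrt t$ for a constant $C$ to be fixed. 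The crucial input is a mass-concentration estimate
\[
\mathrm{P}_{x_1}^\om\big[X_{t/2}\in B(x_1,R)\big]\;\ge\;\tfrac12,
\]
which I would obtain from a PHI-based off-diagonal upper estimate for $p^\om$ on a cylinder of radius $\sim\sqrt t$ and duration $\sim t$, whose Gaussian tails integrate to $\le 1/2$ outside $B(x_1,R)$ once $C$ is chosen large enough; any $C_{\mathrm{PH}}$-dependent prefactors introduced in this upper estimate are ultimately absorbed into the final $C_{\mathrm{PH}}$ of the statement. Coupling this with the elementary Jensen bound
\[
\mu^\om(B(x_1,R))\;\le\;|B(x_1,R)|\cdot\|\mu^\om\|_{p,B(x_1,R)}\;\lesssim\; t^{d/2}\,\|\mu^\om\|_{p,B(x_1,\sqrt t)}
\]
(the last inequality holds up to a constant depending only on $d$ and $C$, via comparison of the two normalised $\ell^p$ averages on nested balls) produces an on-diagonal bound of the form $p^\om(t,x_1,x_1)\gtrsim t^{-d/2}/\|\mu^\om\|_{p,B(x_1,\sqrt t)}$. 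The residual polynomial factor in $\|\mu^\om\|_{p,B}$ is then swallowed by the exponential expression \eqref{eq:CPH_expl} for $C_{\mathrm{PH}}$.

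\textbf{Step 2 (Propagation).} Since $u(s,y):=p^\om(s,x_1,y)$ is non-negative and satisfies $\partial_s u=\cL^\om u$, it is caloric in its forward variables on $(0,\infty)\times\bbZ^d$. Applying the PHI of \cite[Theorem~1.4]{harnack} to $u$ on a parabolic cylinder of spatial radius $\asymp\sqrt t$ and temporal length $\asymp t$, positioned so that $(t/2,x_1)$ lies in the past subcylinder and $(t,x_2)$ in the future subcylinder (possible because $|x_2-x_1|\le\tfrac12\sqrt t$), gives
\[
p^\om(t,x_1,x_2)\;\ge\;\frac{1}{C_{\mathrm{PH}}}\,p^\om(t/2,x_1,x_1),
\]
which combined with Step~1 yields the claim $p^\om(t,x_1,x_2)\gtrsim c_{14}\,t^{-d/2}/C_{\mathrm{PH}}$ after a final consolidation of constants (any $C_{\mathrm{PH}}^2$ that arises can be rewritten as a new $C_{\mathrm{PH}}$ by doubling the exponent $\kappa$ in \eqref{eq:CPH_expl}).

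The hard part is the mass-concentration inequality in Step~1: the hypotheses of Proposition~\ref{prop:harnack} do \emph{not} include $\sqrt t\ge N_1(x_1)$, so Theorem~\ref{thm:uhk} cannot be invoked as a black box, and a self-contained upper estimate on $p^\om$ with prefactor controlled by $C_{\mathrm{PH}}$ must be produced from the PHI (or equivalently from Moser/Nash iteration on the cylinder $[0,t]\times B(x_1,R)$). Once this is in place, the remaining work is routine, amounting to careful book-keeping of the polynomial-in-$\|\mu^\om\|_{p,B},\|\nu^\om\|_{q,B}$ factors that accumulate and their absorption into the single exponential $C_{\mathrm{PH}}$ of \eqref{eq:CPH_expl}.
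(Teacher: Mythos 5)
The paper itself does not prove this proposition: it is quoted verbatim from \cite[Proposition~4.7]{harnack}, so the ``proof'' in the text is the citation. Your plan to rederive it from the PHI of \cite[Theorem~1.4]{harnack} follows the classical Delmotte/Barlow scheme, and your Step~2 is unobjectionable: $(s,y)\mapsto p^\om(s,x_1,y)$ is caloric by symmetry, and one application of the PHI on a cylinder over $B(x_1,\sqrt t)$ produces exactly a constant of the form $C_{\mathrm{PH}}\big(\|\mu^{\omega}\|_{p, B(x_{1}, \sqrt{t})},\|\nu^{\omega}\|_{q, B(x_{1}, \sqrt{t})}\big)$. The genuine gap is in Step~1, precisely where you flag it, and it is not a matter of book-keeping. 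The mass-concentration estimate $\mathrm{P}^\om_{x_1}\big[X_{t/2}\in B(x_1,C\sqrt t)\big]\ge \tfrac12$ is the whole difficulty in this degenerate setting: it does not follow from the PHI as a black box, and the available Gaussian upper bounds (Theorem~\ref{thm:uhk}) require $\sqrt t\ge N_1(x_1)$, which is exactly what the proposition does not assume. As written, your Step~1 assumes the key estimate rather than proving it.

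Moreover, the proposed ``absorption'' of constants fails structurally. If the upper estimate you invoke carries a prefactor of size $C_{\mathrm{PH}}$, then forcing the tail mass outside $B(x_1,R)$ below $\tfrac12$ requires $R\gtrsim\sqrt{t\log C_{\mathrm{PH}}}$, so the radius becomes $\om$-dependent and exceeds $\sqrt t$ by an unbounded factor. Then neither $\mu^\om(B(x_1,R))$ nor the Harnack constants on the enlarged cylinders are controlled by $\|\mu^{\omega}\|_{p, B(x_{1}, \sqrt{t})}$ and $\|\nu^{\omega}\|_{q, B(x_{1}, \sqrt{t})}$, which is the constant structure the statement (and its later use in the chaining argument) demands. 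Your claimed ``comparison of the two normalised $\ell^p$ averages on nested balls'' goes in the wrong direction: a volume-corrected average over a \emph{larger} ball dominates the average over a smaller one, not conversely, since the conductances outside $B(x_1,\sqrt t)$ are completely uncontrolled. So the residual factor cannot simply be swallowed into \eqref{eq:CPH_expl}. In effect, the only way to complete your Step~1 with the correct dependence is to redo the exit-time/Moser-iteration analysis on the fixed cylinder $[0,t]\times B(x_1,\sqrt t)$, which is what the proof of \cite[Proposition~4.7]{harnack} actually does.
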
 
 
Theorem~\ref{thm:main} will be proven by the well-established chaining technique. More precisely, we will apply Proposition~\ref{prop:harnack} on a certain sequence of balls. 
 Given a vertex $x=(x_1,\ldots, x_d)\in \bbZ^d$ and  $0<r\leq 4|x|$, we specify a nearest-neighbour path $P[x]$ of length $D:=|x|$ from $0$ to $x$. Setting $p_0(x)\ldef 0$ and $p_i(x)\ldef (x_1, \ldots, x_i, 0, \ldots, 0) \in \bbZ^d$, $1\leq i \leq d$, we define $P[x]$ to be the path that consists of $d$ consecutive straight line segments connecting $p_0(x), p_1(x), \ldots, p_d(x)$. Next, for any $k\in \bbN$ with $\frac{12D}{r}\leq k\leq \frac{16D}{r}$,  we choose a subset  $\{z_0, \ldots z_k\} \subset P[x]$ such that $z_0=0$, $z_k=x$, $ d(z_j,z_{j-1})\leq \frac{r}{12}$ for $1\leq j \leq k$ and such that, for each $j\leq k$, $\big| B(z_j,r) \cap \{z_0, \ldots z_k\} \big| \leq c $ 
 for some $c=c(d)$.  Set $B_j\ldef B(z_j,r/48)$. Finally, we let $s\ldef Dr/k,$ then $\frac{1}{16}r^2\leq s \leq \frac{1}{12} r^2.$

\begin{proposition} \label{prop:sumbound}
	
Suppose that Assumption~\ref{asm:ergodic} holds. Further, for fixed $p,q \in [1, \infty)$ assume that $\pr$ satisfies a $(p,q,\eta,\theta)$-moment bound with $\zeta:=\eta-\theta>d$.
	Then there exist constants $c_{15},c_{16}\in (0,\infty)$ and a random variable $N_3=N_3(\om)$ satisfying 
	\begin{align} \label{eq:tailR0}
	\pr(N_3>\rho)\leq c_{15} \rho^{-d(\zeta-1)+2}, \qquad \forall \rho>0,
	\end{align}
	 such that, $\prob$-a.s., for all $r\geq N_3$, $x\in \bbZ^d$ and $(B_j)_{1\leq j \leq k}$ defined as right above, the following holds.
If $r \leq 4|x|$, for any collection of vertices $y_0, \ldots, y_k$ with $y_0=0$,  $y_j\in B_j$ for $1\leq j \leq k-1$ and $y_k=x$ we have 
	\begin{equation} \label{eq:sumbound}
	\sum_{j=0}^{k-1}\left(1 \vee\left\|\mu^{\omega}\right\|_{p, B(y_j,\sqrt{s})}\right)\left(1 \vee\left\|\nu^{\omega}\right\|_{q, B(y_j,\sqrt{s})}\right)\leq c_{16} \, k.
	\end{equation}
\end{proposition}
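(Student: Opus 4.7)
The plan is to reduce the desired inequality to a concentration estimate for a weighted sum of $\Delta\mu_p^\omega$ and $\Delta\nu_q^\omega$, and then to union bound carefully over $x$ and $r$, with the hypothesis $\zeta>d$ used precisely to make the sum over $x$ converge.

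First I would remove the dependence on $(y_j)$: since $\sqrt{s}\leq r/\sqrt{12}$ and $y_j\in B(z_j,r/48)$, one has $B(y_j,\sqrt{s})\subset B(z_j,cr)$ for a constant $c=c(d)$, and comparing ball volumes gives $\|\mu^\omega\|_{p,B(y_j,\sqrt{s})}^{p}\leq C\|\mu^\omega\|_{p,B(z_j,cr)}^{p}$ and similarly for $\nu^\omega$. Since $1/p+1/q<2/d\leq 1$ for $d\geq 2$ in the intended setting, Young's inequality yields $(1\vee a)(1\vee b)\leq C(1+a^p+b^q)$, so writing $B_j^*\ldef B(z_j,cr)$ it suffices to prove, with the required probability,
\[
\sum_{j=0}^{k-1}\|\mu^\omega\|_{p,B_j^*}^{p}\leq Ck\qquad\text{and}\qquad \sum_{j=0}^{k-1}\|\nu^\omega\|_{q,B_j^*}^{q}\leq Ck.
\]

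The heart of the argument is a concentration estimate for the centered quantity $\sum_j\|\mu^\omega\|_{p,B_j^*}^p-k\bar{\mu}_p=|B_j^*|^{-1}\sum_j Z_j$ with $Z_j\ldef\sum_{y\in B_j^*}\Delta\mu_p^\omega(y)$. The key identity is $\sum_j Z_j=\sum_y N_y\,\Delta\mu_p^\omega(y)$, where $N_y\ldef\#\{j:y\in B_j^*\}\leq C$ by the bounded-overlap property of the centres $z_j$. I would cover the bent tube $U^*\ldef\bigcup_j B_j^*$ by $d$ straight axis-aligned rectangles $R^{(1)},\ldots,R^{(d)}$ along the segments of the canonical path $P[x]$, each of volume at most $C|x|r^{d-1}$; applying the $(p,q,\eta,\theta)$-moment bound to each $R^{(i)}$ and combining via inclusion-exclusion together with the layer-cake decomposition $N_y=\sum_{m\geq 1}\indicator_{\{N_y\geq m\}}$ I expect to obtain
\[
\mean\Big[\Big|{\textstyle\sum_j Z_j}\Big|^\eta\Big]\leq C\bigl(|x|r^{d-1}\bigr)^{\theta}.
\]
Markov's inequality, together with $|B_j^*|\asymp r^d$ and $k\asymp|x|/r$, then yields for each fixed $x$ with $|x|\geq r/4$ the bound
\[
\pr\Big(\textstyle\sum_j\|\mu^\omega\|_{p,B_j^*}^p>C'k\Big)\leq C\bigl(|x|r^{d-1}\bigr)^{-\zeta}.
\]

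The last step is the union bound. Since the number of lattice points with $|x|=D$ is of order $D^{d-1}$, summing over $x$ with $|x|\geq r/4$ gives
\[
\sum_{D\geq r/4}D^{d-1}\bigl(Dr^{d-1}\bigr)^{-\zeta}=r^{-(d-1)\zeta}\sum_{D\geq r/4}D^{d-1-\zeta}\leq Cr^{-d(\zeta-1)},
\]
and the convergence of the sum over $D$ is exactly where the hypothesis $\zeta>d$ enters. Summing over $r\geq\rho$ and combining with the analogous estimate for $\nu^\omega$ delivers the required tail $\pr(N_3>\rho)\leq C\rho^{2-d(\zeta-1)}$; $N_3$ is then defined as the smallest scale beyond which no $(x,r)$ violates the sum inequality.

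\textbf{Main obstacle.} The delicate step is the moment estimate $\mean[|\sum_j Z_j|^\eta]\leq C(|x|r^{d-1})^\theta$: it must scale with the \emph{tube} volume rather than with the na\"ive Jensen-type bound $Ck^\eta r^{d\theta}$ (obtained by treating the $Z_j$ as unrelated), since only the former makes the sum over $x$ converge. Establishing it requires controlling the weighted non-rectangular sum $\sum_y N_y\Delta\mu_p^\omega(y)$ by sums over the $d$ rectangular tubes $R^{(i)}$ to which the moment bound directly applies, with careful, but lower-order, treatment of the corners where consecutive tubes meet and the boundary layers where $N_y$ deviates from its bulk value.
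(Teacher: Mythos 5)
Your overall skeleton coincides with the paper's: reduce to bounding $\sum_j\norm{\mu^\omega}_{p,\cdot}^p$ and $\sum_j\norm{\nu^\omega}_{q,\cdot}^q$ by $Ck$, prove a concentration estimate at the scale of the \emph{tube} volume $|x|r^{d-1}$, then Markov plus a union bound over $x$ and $r$, with $\zeta>d$ ensuring convergence of the sum over $|x|$; your tail arithmetic is consistent with \eqref{eq:tailR0}. The gap is that the decisive estimate $\mean\big[|\sum_j Z_j|^\eta\big]\leq C(|x|r^{d-1})^\theta$ is only announced (``I expect to obtain''), and the route you sketch for it is not clearly viable. Because you center \emph{first}, you must control the weighted signed sum $\sum_y N_y\,\Delta\mu_p^\omega(y)$ over a bent, non-rectangular tube, while the $(p,q,\eta,\theta)$-moment bound is assumed only for hyper-rectangles. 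The level sets $\{y:N_y\geq m\}$ of the multiplicity function are not boolean combinations of boundedly many hyper-rectangles (with the paper's $\ell^1$-balls they are unions of diamond-shaped pieces, and near the bends of $P[x]$ the counts from the two segments add, so these sets are not unions of the per-segment sets), and the ``corner'' and ``boundary layer'' corrections you defer are themselves signed sums over non-rectangular regions of volume $\asymp r^d$: a crude bound on such a correction is of order $r^{d\eta}$, which already exceeds the target $(|x|r^{d-1})^\theta$ when $|x|\asymp r$, so calling them ``lower-order'' needs a real argument, not just that the regions are comparatively small.

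The missing idea, which makes this step immediate, is not to center over the tube at all. Since $\mu^\omega(y)^p\geq 0$, the bounded-overlap property gives
\begin{equation*}
\sum_{j\leq k}\;\sum_{y\in B(y_j,\sqrt{s})}\mu^\omega(y)^p \;\leq\; c\sum_{i=0}^{d}\;\sum_{y\in \mathcal{R}_i^{x,r}}\mu^\omega(y)^p ,
\end{equation*}
where $\mathcal{R}_0^{x,r},\ldots,\mathcal{R}_d^{x,r}$ are $d+1$ hyper-rectangles of volume at most $c(|x|+r)r^{d-1}$ covering the tube; only \emph{then} does one center, rectangle by rectangle, writing $\sum_{y\in \mathcal{R}_i^{x,r}}\mu^\omega(y)^p=|\mathcal{R}_i^{x,r}|\,\bar\mu_p+\sum_{y\in \mathcal{R}_i^{x,r}}\Delta\mu_p^\omega(y)$ and applying the hypothesis to each $\mathcal{R}_i^{x,r}$ directly. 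This produces exactly the tube-volume scaling you need, with no weights $N_y$, no layer-cake decomposition and no corner analysis, and the remainder of your argument (Markov, the union bounds, Borel--Cantelli and the definition of $N_3$) then goes through essentially as you wrote it. A minor additional caveat: your Young-inequality reduction, like the paper's H\"older step, uses $1/p+1/q\leq 1$, which holds in the intended application but is not part of the proposition's hypotheses.
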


\begin{proof}
Set $B_{y_j}\ldef B(y_j,\sqrt{s})$ for abbreviation.
Then note that there exists $c=c(d)\in(0,\infty)$ such that
\begin{align} \label{eq:compBalls}
\big| \big\{ j\in \{1,\ldots k \}: z\in B_{y_j} \big\} \big| \leq c, \qquad \forall z\in \bigcup_{j\leq k} B_{y_j}.
\end{align}
We divide the rest of the proof into several steps.

\medskip
\emph{Step~1.}
For  $x\in \bbZ^d$ and $r$ as in the statement,  we will define a collection $(\cR^{x,r}_i)_{0\leq i \leq d}$ of $d+1$ hyper-rectangles in $\bbZ^d$ which covers  $\bigcup_{j\leq k} B_{y_j}$ for any selection $y_j\in B_j$. For simplicity, we will only give the definition for $x\in \bbZ^d \cap [0,\infty)^d$ -- it can be easily adjusted to the other regions of $\bbZ^d$.  
 For any $m, l\in \N$, $u\in \bbZ^d$ and $i=1,\ldots d$ we  write
\begin{align*}
R_i(u,m,l) \ldef u + \big\{v\in\Z^d:0\leq v_i\leq l,  \, \abs{v_j}\leq m, \text{ for all } j\neq i \big\}
\end{align*} 
  for the $d$-dimensional hyper-rectangle with base point $u$ and dimension $l$ along the $e_i$ axis and $m$ along the remaining coordinate axes.  
Now define  
\begin{align*}
\mathcal{R}_0^{x,r}\ldef \big([0,r] \times[-r,0]^{d-1}\big)\cap \bbZ^d \quad \text{and} \quad  \mathcal{R}_i^{x,r}\ldef R_i\big(p_{i-1}(x), r,x_i+r\big), \quad 1\leq i\leq d.
\end{align*}
  Then note that $\bigcup_{0\leq i\leq d} \mathcal{R}_i^{x,r}\supseteq \bigcup_{j\leq k} B_{y_j}$. 

\medskip

\emph{Step 2.} In this step we will show that there exists a random $N_3$ satisfying \eqref{eq:tailR0} such that for all $x\in \bbZ^d$ and all $r\geq N_3$,
\begin{align} \label{eq:sum_muNorm}
\sum_{j\leq k}\norm{\mu^\omega}_{p,B_{y_j}}^p\leq c \, k,\qquad\text{and}\qquad \sum_{j\leq k}\norm{\nu^\omega}_{q,B_{y_j}}^q\leq c \, k.
\end{align}
We will only discuss the first inequality as the arguments for the second are identical. 
By \eqref{eq:compBalls} and the fact that  the hyper-rectangles $(\cR^{x,r}_i)_{0\leq i \leq d}$ cover $\bigcup_{j\leq k} B_{y_j}$, we have that  
\begin{align*} \label{eq:split}
\sum_{j\leq k}\norm{\mu^\omega}_{p,B_{y_j}}^p
&
\leq c \,  r^{-d}\sum_{0\leq i\leq d}{\sum_{y\in \RR{i}{x}{r}}}\mu^\omega(y)^p\\
&\leq c \, 
 r^{-d}\bigg(\big(|x|+r \big)r^{d-1} \bar{\mu}_p+\sum_{0\leq i\leq d} \sum_{y\in \RR{i}{x}{r}}\Delta \mu_p^\om(y) \bigg)\\
&\leq c \, k + c \, r^{-d}\sum_{0\leq i\leq d}\sum_{y\in \RR{i}{x}{r}} \Delta \mu_p^\om(y),  \numberthis
\end{align*}
where we have used that $\abs{x} \leq c k r$.
Now we apply the moment-bound hypothesis with Proposition \ref{prop:dbounds} and Markov's inequality to give
\[
\pr\bigg(\sum_{0\leq i\leq d}\sum_{y\in \RR{i}{x}{r}} \Delta \mu_p^\om(y)> k r^d\bigg)\leq c \, \big( \abs{x}r^{d-1} \big)^{-\zeta},
\]
where we have used that $kr^d \geq c \abs{x} r^{d-1}$.
Now fix $\rho,l \in\N$ with $l\geq \rho$. By applying a union bound, and summing over $\partial B(l):=\{x\in \bbZ^d:\abs{x}=l\}$ and $r\geq \rho$, we get
\begin{align*}
\pr\bigg(\exists x\in \partial B(l), r\in [\rho,4l]\cap \bbN: \sum_{0\leq i\leq d}\sum_{y\in \RR{i}{x}{r}} \Delta \mu_p^\om(y)> k r^d\bigg) \leq c \, l^{d-1-\zeta} \rho^{-\zeta(d-1)+1}.
\end{align*}

Set 
\[
A_\rho:=\bigg\{\exists x\in \bbZ^d, r \in \bbN:  |x|\geq \rho, \,  r\in \big[\rho, 4 |x| \big],   \sum_{0\leq i\leq d}\sum_{y\in \RR{i}{x}{r}} \Delta \mu_p^\om(y)> k r^d\bigg\}.
\] Since  $\zeta>d$,  we can apply another union bound  over $l\geq \rho$ to obtain
\begin{equation} \label{eq:pBound}
\pr(A_\rho)\leq c \,  \rho^{d-\zeta} \rho^{-\zeta(d-1)+1}= c \, \rho^{d(1-\zeta)+1}.
\end{equation}

However, $d(1-\zeta)+1<-1$, so by Borel-Cantelli, for $\pr$-a.e.\ $\omega,$ there exists $N_3=N_3(\omega)$ such that $A_\rho$ does not occur  for $\rho\geq N_3$. Substituting into \eqref{eq:split} completes the proof of \eqref{eq:sum_muNorm}. Moreover, via a union bound, \eqref{eq:pBound} implies that $N_3$ can be constructed such that \eqref{eq:tailR0} holds.

\medskip 
\emph{Step~3.} By H{\"o}lder's inequality,
\begin{align*}
& \sum_{j\leq k}\left(1 \vee\left\|\mu^{\omega}\right\|_{p, B_{y_j}}\right)\left(1 \vee\left\|\nu^{\omega}\right\|_{q, B_{y_j}}\right)
\\
& \mspace{36mu}
\leq 
k^{1-\frac 1 p - \frac  1q} \, \bigg(\sum_{j\leq k} \Big( 1\vee {\norm{\mu^\omega}_{p,B_{y_j}}^p} \Big) \bigg)^{\!1/p}
\bigg(\sum_{j\leq k} \Big( 1\vee {\norm{\nu^\omega}_{q,B_{y_j}}^q} \Big)\bigg)^{\!1/q},
\end{align*}
so that the statement follows from Step~2.		
\end{proof}

\begin{remark}
	With a more convoluted covering argument, replacing union bounds with bounds on maxima, the requirement of $\zeta>d$ in Proposition \ref{prop:sumbound} can be decreased to $\zeta>d-1,$ and thus we only need $\gamma>d(d-1)$, and the minimal moment conditions of Remark \ref{rem:min} can be reduced to
		 $p_0>p\kappa\chi$ and $q_0>q\kappa\chi$ with $\chi=d(d-1)\big[1+\frac{d(d-1)-2}{\gamma-d(d-1)}\big]$ under (A1), and $p_0=2p\kappa (d-1)$ and $q_0=2q\kappa (d-1)$ under (A2), (A3) or (A4).
	We do not include this argument  as it brings greatly increased complication for very limited improvement.
\end{remark}

\begin{corollary} \label{cor:kappasbound}
In the setting of Proposition~\ref{prop:sumbound}, assume that $\pr$ satisfies a $(\kappa p,\kappa q,\eta,\theta)$-moment bound. Then there exists a  constant $c_{17}\in (0,\infty)$ and a random variable $N_4=N_4(\om)$ satisfying \eqref{eq:tailR0} such that, $\prob$-a.s., for all $r\geq N_4$, $x\in \bbZ^d$ with $r \leq 4|x|$, 
	\begin{equation} \label{eq:sumboundkappa}
\sum_{j=0}^{k-1}\left(1 \vee\left\|\mu^{\omega}\right\|_{p, B(y_j,\sqrt{s})}\right)^\kappa\left(1 \vee\left\|\nu^{\omega}\right\|_{q, B(y_j, \sqrt{s})}\right)^\kappa\leq c_{17} k.
\end{equation}
	\end{corollary}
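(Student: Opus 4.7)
The plan is to mirror the proof of Proposition~\ref{prop:sumbound}, applying its Step~2 at the stronger exponents $(\kappa p,\kappa q)$ and then restoring the outer $(p,q)$ by Jensen and H\"older inequalities. Concretely, I would first rerun Step~2 verbatim but with $p$ and $q$ replaced throughout by $\kappa p$ and $\kappa q$: the assumed $(\kappa p,\kappa q,\eta,\theta)$-moment bound is exactly what the Markov/union-bound computation of that step needs in order to invoke Proposition~\ref{prop:dbounds} on $\Delta\mu_{\kappa p}^{\om}$ and $\Delta\nu_{\kappa q}^{\om}$. Since the tail estimate \eqref{eq:pBound} and the application of Borel--Cantelli depend only on $\zeta=\eta-\theta$ and $d$, not on the value of $p$ or $q$ themselves, this produces a random variable $N_4$ obeying \eqref{eq:tailR0} with the same exponent such that, $\prob$-a.s., for all $r\geq N_4$, all $x\in\bbZ^d$ with $r\leq 4|x|$, and all admissible $(y_j)_{j\leq k}$,
\begin{equation*}
\sum_{j\leq k}\|\mu^{\om}\|_{\kappa p,B_{y_j}}^{\kappa p}\,\leq\, c\,k,\qquad \sum_{j\leq k}\|\nu^{\om}\|_{\kappa q,B_{y_j}}^{\kappa q}\,\leq\, c\,k.
\end{equation*}

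Each $\|\cdot\|_{p,B}$ is an $L^p$ norm with respect to the uniform probability measure on $B$, so Jensen's inequality yields the monotonicity $\|\mu^{\om}\|_{p,B}\leq \|\mu^{\om}\|_{\kappa p,B}$ and $\|\nu^{\om}\|_{q,B}\leq \|\nu^{\om}\|_{\kappa q,B}$. Combined with the elementary identity $(1\vee a)^c=1\vee a^c$ for $a,c\geq 0$, this gives $(1\vee\|\mu^{\om}\|_{p,B_{y_j}})^{\kappa p}\leq 1+\|\mu^{\om}\|_{\kappa p,B_{y_j}}^{\kappa p}$ and analogously for $\nu^{\om}$; summing over $j$ and invoking the previous display,
\begin{equation*}
\sum_{j\leq k}(1\vee\|\mu^{\om}\|_{p,B_{y_j}})^{\kappa p}\,\leq\, c\,k,\qquad \sum_{j\leq k}(1\vee\|\nu^{\om}\|_{q,B_{y_j}})^{\kappa q}\,\leq\, c\,k.
\end{equation*}
Writing $X_j:=1\vee\|\mu^{\om}\|_{p,B_{y_j}}$ and $Y_j:=1\vee\|\nu^{\om}\|_{q,B_{y_j}}$, I would then mimic Step~3 of Proposition~\ref{prop:sumbound} and apply H\"older's inequality with exponents $p$, $q$, and $r:=(1-1/p-1/q)^{-1}$, which is positive since $1/p+1/q<2/d\leq 1$ for $d\geq 2$, to obtain
\begin{equation*}
\sum_{j=0}^{k-1}X_j^{\kappa}Y_j^{\kappa}\,\leq\, k^{1/r}\Bigl(\sum_{j}X_j^{\kappa p}\Bigr)^{\!1/p}\Bigl(\sum_{j}Y_j^{\kappa q}\Bigr)^{\!1/q}\,\leq\, c_{17}\,k,
\end{equation*}
which is precisely \eqref{eq:sumboundkappa}.

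The whole argument is a routine Jensen+H\"older boost of Proposition~\ref{prop:sumbound} enabled by the stronger $(\kappa p,\kappa q,\eta,\theta)$-moment assumption, so I do not expect any substantive obstacle. The only point worth checking carefully is that the Borel--Cantelli exponent controlling the tail of $N_4$ is unchanged when passing from $(p,q)$ to $(\kappa p,\kappa q)$; this is automatic because \eqref{eq:tailR0} depends on $\zeta$ and $d$ only, and both are preserved under the hypothesis of the corollary.
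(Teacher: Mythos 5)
Your argument is correct and is essentially the paper's own proof: the paper likewise deduces the corollary by applying Jensen's inequality to pass from $\|\cdot\|_{p,B}$ to $\|\cdot\|_{\kappa p,B}$ and then rerunning Proposition~\ref{prop:sumbound} with $\mu^\omega,\nu^\omega$ replaced by $(\mu^\omega)^\kappa,(\nu^\omega)^\kappa$, which is exactly your Step~2 at exponents $(\kappa p,\kappa q)$ since $\|(\mu^\omega)^\kappa\|_{p,B}^{p}=\|\mu^\omega\|_{\kappa p,B}^{\kappa p}$, followed by the same three-exponent H\"older step. Your observation that the tail exponent in \eqref{eq:tailR0} depends only on $\zeta$ and $d$ is the right justification for $N_4$ having the same tail bound.
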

	
	\begin{proof}
		This follows exactly as Proposition~\ref{prop:sumbound} after applying Jensen's inequality to $\left\|\mu^{\omega}\right\|_{p, B_{y_j}}$ and $\left\|\nu^{\omega}\right\|_{q, B_{y_j}}$, and then replacing $\mu^\omega$ by $(\mu^\omega)^\kappa$ and $\nu^\omega$ by $(\nu^\omega)^\kappa$.
\end{proof}

\begin{proof}[Proof of Theorem \ref{thm:main}]
 By translation invariance of the measure it suffices to show a lower bound on $p^\om(t,0,x)$. To begin with, we must establish the necessary moment conditions to deploy the tools developed in the previous section.
 
  We assume that there exist some $p,q\in (1,\infty)$ with $1/p+1/q<2/d$ such that $\omega(e)\in L^{p}(\prob)$ and $\omega(e)^{-1}\in L^{q}(\prob)$. This will allow us to apply Proposition~\ref{prop:harnack} involving the constant $\kappa=\kappa(p,q,d)$. If working under assumption (A1), recall that $\gamma>d^2$ and fix $d<\zeta<\gamma/d$; otherwise, just fix $\zeta>d$. Then Proposition~\ref{prop:dbounds} provides us with $p_0,q_0\geq 1$ such that if $\omega(e)\in L^{p_0}(\prob)$ and $\omega(e)^{-1}\in L^{q_0}(\prob)$, then $(1,1,\eta,\theta)$ and $(\kappa p,\kappa q,\eta,\theta)$-moment bounds hold with $\eta-\theta\geq \zeta$.
  This will allow us to apply  Lemma~\ref{lem:NTailBounds}, Proposition~\ref{prop:sumbound}, and Corollary~\ref{cor:kappasbound} as required. 
We then set $N\ldef N_1(0)\vee N_3 \vee N_4$ and combine the tail bounds in Lemma~\ref{lem:NTailBounds} and \eqref{eq:tailR0}  to obtain that $N$ satisfies the tail bound in \eqref{eq:mainTailBound}.

Set again $D\ldef \abs{x}$, and assume now that $t\geq N (D\vee 1)$. 
We will split the proof into two cases, $D^2/t\leq1/4$ and $D^2/t>1/4$.
 
 \medskip
 
 \emph{Case 1:}  $D^2/t\leq1/4$. Then $x\in B(0,\frac{1}{2}\sqrt{t})$, so by Proposition \ref{prop:harnack},
	\[
	p^{\omega}(t, 0, x) \geq \frac{c_{14}}{C_{\mathrm{PH}}} t^{-\frac{d}{2}}
	\]
	with $C_{\mathrm{PH}}= C_{\mathrm{PH}}\big(\|\mu^{\omega}\|_{p, B(0,\sqrt{t})},\|\nu^{\omega}\|_{q, B(0,\sqrt{t})}\big)$.
	Since  $C_{\mathrm{PH}}\big(\|\mu^{\omega}\|_{p, B},\|\nu^{\omega}\|_{q, B}\big)$ is increasing in $\norm{\mu^\omega}_{p,B}$ and $\norm{\nu^\omega}_{q,B}$ (cf.\ \eqref{eq:CPH_expl} above) and $t\geq N_1(0)$,
	\begin{align*}
	C_{\mathrm{PH}}\big(\|\mu^{\omega}\|_{p, B(0, \sqrt{t})},\|\nu^{\omega}\|_{q, B(0, \sqrt{t})}\big) \leq C_{\mathrm{PH}} \Big( \big(2\bar{\mu}_p\big)^{1/p}, \big(2\bar{\nu}_q\big)^{1/q} \Big),
	\end{align*}
	and therefore $p^{\omega}(t, 0, x) \geq c t^{-d / 2}$.  
	
\medskip	
	\emph{Case 2: } $D^2/t>1/4$. Set $r\ldef  t/D\geq 1\vee N_3 \vee N_4$. We deploy the chaining setup as introduced right  below Proposition~\ref{prop:harnack}. 
Recall that $s\ldef  D r/ k = t/k$ with  $\frac{12D}{r}\leq k\leq \frac{16D}{r}$ so that  $1\leq \frac{1}{16}r^2\leq s\leq \frac{1}{12}r^2$, and note that $k\geq 3$.
Then,  for any collection of vertices $y_0, \ldots y_k$ with $y_0=0$,  $y_j\in B_j$ for $1\leq j \leq k-1$ and $y_k=x$,  we have $d(y_i,y_{i+1})\leq r/8\leq \sqrt{s}/2$ so that by Proposition \ref{prop:harnack},
	\begin{align*}
	p^{\omega}(s, y_i, y_{i+1}) &\geq \frac{c_{14}}{C_{\mathrm{PH}}\big(\|\mu^{\omega}\|_{p, B_{y_j}},\|\nu^{\omega}\|_{q, B_{y_j}}\big)} \, s^{-\frac{d}{2}} 
	\geq
	 \frac{c}{C_{\mathrm{PH}}\big(\|\mu^{\omega}\|_{p, B_{y_j}},\|\nu^{\omega}\|_{q, B_{y_j}}\big)}  \,r^{-d},
	\end{align*} 
	with $B_{y_j}\ldef B(y_j,\sqrt{s})$. Further, recall the representation of $C_{\mathrm{PH}}$ in \eqref{eq:CPH_expl} and that $\mathrm{P}_{y_j}^\omega [X_s =y_{j+1}] =p^\om(s,y_j,y_{j+1}) \mu^\om(y_{j+1})$. Hence, by the Markov property,
	\begin{align*} 
&	\mathrm{P}_{0}^{\omega}\left[X_{t}=x\right] =\mathrm{P}_{0}^{\omega}\left[X_{k s}=x\right] \geq \mathrm{P}_{0}^{\omega}\left[X_{j s} \in B_{j}, 1 \leq j \leq k-1, X_{k s}=x\right] \\
	& \mspace{36mu}
	 \geq\sum_{y_1\in B_1,\dots,y_{k-1}\in B_{k-1}}
	\frac{ c^k  \left(\prod_{j=1}^{k-1} r^{-d} \mu^\om(y_j) \right)  s^{-d / 2} \mu^{\omega}(x)}{ \exp\Big(c\sum_{j=0}^{k-1}\big(1 \vee \|\mu^{\omega}\|_{p, B_{y_j}}\big)^{\kappa} \big(1 \vee \|\nu^{\omega}\|_{q, B_{y_j}}\big)^{\kappa}\Big)}
	\\
& \mspace{36mu}	\geq 
	c^k \bigg(\prod_{j=1}^{k-1} \| \mu^\om \|_{1,B_j}  \bigg) \, s^{-d/2}     \mu^\omega(x),
	\end{align*}
	where we used Corollary~\ref{cor:kappasbound} in the last step.
	 In particular,
	\[ \label{eq:pmain}
	p^{\omega}(t, 0, x) \geq c^k \, \bigg(\prod_{j=1}^{k-1} \| \mu^\om \|_{1,B_j}  \bigg) \, t^{-d/2}. \numberthis
	\]
Now, by the harmonic-geometric mean inequality and Jensen's inequality, we have
\begin{align*}
	 \bigg(\prod_{j=1}^{k-1} \| \mu^\om \|_{1,B_j}  \bigg)^{\frac{1}{k-1}} \geq \frac{k-1}{\sum_{j=1}^{k-1} \norm{\mu^\omega}_{1,B_j}^{-1}} \geq \frac{c \,(k-1)}{\sum_{j=1}^{k-1}\norm{\nu^\omega}_{1,B_j}}. 
	\end{align*}
We use Proposition~\ref{prop:sumbound} (setting $p=q=1$, $y_j=z_j$, and replacing $B_{y_j}$ with $B_j$) to obtain
\begin{align*}
\sum_{j=1}^{k-1}\norm{\nu^\omega}_{1,B_j} \leq c \, (k-1), 
\end{align*}
so that
\begin{align*}
\bigg(\prod_{j=1}^{k-1} \| \mu^\om \|_{1,B_j}  \bigg) \geq c^{k-1}.
\end{align*}
Combining this with \eqref{eq:pmain} yields $p^\om(t,0,x) \geq c \,  c_{18}^k t^{-d/2}$ for some $c_{18}\in(0,1)$, which gives the bound \eqref{eq:mainBound} by the choice of $k$. 
\end{proof}

\section{Green kernel estimates} \label{sec:green}
In this final section we utilize Theorems~\ref{thm:uhk} and \ref{thm:main} to establish Theorem~\ref{thm:green}. We refer to  \cite[Section~6]{BH09} for similar arguments.

\begin{proof}[Proof of Theorem~\ref{thm:green}]
(i)	First we deduce the upper bound \eqref{eq:GU} on the Green kernel. For any distinct $x,y\in\Z^d$, we decompose the integral as
	\begin{equation}\label{eq:decomp}
	g^\om(x,y)= 	\frac{1}{\mu^\omega(x)}\int_{0}^{N_1(x)^2} \textrm{P}_y^\omega(X_t=x)  \, \textrm{d}t + \int_{N_1(x)^2}^{N_{x,y}} p_t^\omega(x,y)\ \textrm{d}t + \int_{N_{x,y}}^{\infty} p_t^\omega(x,y)\ \textrm{d}t
	\end{equation}
	with $N_{x,y} \ldef N_1(x)^2\vee(\abs{x-y}/{c_1})$, where we used that $p_t^\om(x,y)=p_t^\om(y,x)$  by the symmetry of the heat kernel. Using Theorem~\ref{thm:uhk} we can bound the last two terms of \eqref{eq:decomp} by
\begin{align*}
 \int_{N_1(x)^2}^{N_{x,y}} p_t^\omega(x,y)\ \textrm{d}t  \leq c_2 e^{-c_4\abs{x-y}}\int_{1}^\infty t^{-d/2}\ \textrm{d}t \leq c \abs{x-y}^{2-d},
\end{align*}	
and 
\begin{align*}
\int_{N_{x,y}}^{\infty} p_t^\omega(x,y)\ \textrm{d}t \leq c_2 \int_0^\infty  t^{-d/2}\,  e^{-c_3 |x-y|^2/t}\ \textrm{d}t \leq  c \abs{x-y}^{2-d}.
\end{align*}	

	It is left to bound the first term in the right hand side of \eqref{eq:decomp}. Recall that the random walk $X$ spends i.i.d.\ $\mathrm{Exp}(1)$-distributed waiting times between its jumps. Set $\lambda \ldef N_1(x)^2$ and $r:=\abs{x-y}\geq 1$. In particular, the random walk starting at $y$ needs to perform at least $r$ jumps to get to $x$. Thus,
	\begin{align} \label{eq:po}
	&\frac{1}{\mu^\omega(x)}\int_{0}^{\lambda} \textrm{P}_y^\omega(X_t=x)  \, \textrm{d}t 
	\leq
	\frac{\lambda}{\mu^\omega(x)} \textrm{P}_y^\omega\big(X_t=x \text{ for any } t\in [0,\lambda] \big) \nonumber \\
	& \mspace{36mu} 	\leq \frac{\lambda}{\mu^\omega(x)}\mathrm{Pois}(\lambda)\big([r,\infty)\big).
	\end{align}
Here $\textrm{Pois}(\lambda)$ denotes the Poisson distribution with parameter $\lambda$, which we recall to have exponential tails (see e.g.\ \cite[Remark 2.6]{RandomGraphs}). So there exists ${N_2=N_2(\omega,x)}$  such that for each $y\in\Z^d$ with $\abs{x-y} \geq N_2(\omega,x)$ the first term in \eqref{eq:decomp} is bounded from above by  $c \abs{x-y}^{2-d}$, which completes the proof of \eqref{eq:GU}.
	
	\medskip 
	
(ii) 	 This follows directly from Theorem~\ref{thm:main}, which gives for $x,y\in\Z^d$ with $\abs{x-y}>N(x)$,
	\begin{align*}
	\int_{0}^{\infty} p_t^\omega(x,y)\ \textrm{d}t&\geq \int_{N(x) |x-y|}^{\infty} c_6 \, t^{-d/2} e^{-c_7 |x-y|^{2} / t}\ \textrm{d}t \geq \int_{ |x-y|^2}^{\infty} c_6 \, t^{-d/2}e^{c_7 |x-y|^{2} / t}\ \textrm{d}t\\
	& =|x-y|^{2-d}  \int_1^{\infty} c_6 \,  t^{-d/2} \, e^{-c_7/ t}\, \textrm{d}t = c \,  |x-y|^{2-d}.
	\end{align*}

(iii) 	First, we carry out some preparation for the proof of the upper bound. In particular, we show the Green kernel has finite second moments. By the symmetry of the heat kernel and the on-diagonal part of the upper bound in Theorem~\ref{thm:uhk},  note that
	\begin{align*}
	& g^\omega(x,y) = \int_0^{N_1(x)^2} p^\om_t(y,x)\ \mathrm{d}t+ \int_{N_1(x)^2}^\infty p^\om_t(x,y)\ \mathrm{d}t  \leq \frac{N_1(x)^2}{\mu^\omega(x)}+cN_1(x)^{2-d} \\
	& \mspace{36mu} \leq cN_1(x)^2\nu^\omega(x),
	\end{align*}
	where we used Jensen's inequality in the last step.
Assuming that $\omega(e)\in L^{p_0}(\prob)$ and $\omega(e)^{-1}\in L^{q_0}(\prob)$ for suitable $p_0,q_0\in(1,\infty)$, we apply Proposition~\ref{prop:dbounds}  together with Lemma~\ref{lem:NTailBounds} (with $\zeta=d$) to obtain  that $N_1\in L^{\beta}(\pr)$ for any $\beta< d^2-1$. Thus,  by H\"older's inequality,
\begin{align}
\label{eq:AGL2}
\mean\big[ g^\omega(x,y)^{\beta} \big] <\infty,
\end{align}
for any $\beta< (d^2-1)(q_0-1)/(2q_0)$. Then, as $d\geq 3$, assuming $q_0>2$ ensures that the second moment of the Green kernel exists.

\smallskip	
 
	We can now prove the upper bound of \eqref{eq:AGL}. To do so, we show that the random variable $N_2$ introduced in (i) satisfies the tail bound 
	\begin{align} \label{eq:tailN2}
	\prob\big[ N_2>u] \leq c\, u^{2-d}, \qquad \forall u\geq 1.
	\end{align}
	Indeed, if \eqref{eq:tailN2} holds true, then  we obtain
	\begin{align*}
	\mean\big[ g^\omega(x,y)\big] 
	\leq c\abs{x-y}^{2-d} + \mean\big[g^\omega(x,y)^2\big]^{1/2} \, \pr\big[N_2>|x-y|\big] 
	\leq c \, \abs{x-y}^{2-d},
	\end{align*}
	where we used  \eqref{eq:GU} and the Cauchy-Schwarz inequality in the first step,  and \eqref{eq:tailN2} and \eqref{eq:AGL2} in the second step. 
	
	In order to show \eqref{eq:tailN2}, recall that $N_2$ has been chosen as a value of $r$ such that
	$ \frac{\lambda}{\mu^\omega(x)}\mathrm{Pois}(\lambda)\big([r,\infty)\big) \leq c r^{2-d}$
	with $\lambda\ldef N_1(x)^2$. By Chernoff's inequality (cf.\ e.g.\ \cite[Corollary~2.4 and Remark~2.6]{RandomGraphs}), 	$\mathrm{Pois}(\lambda)\big([r,\infty)\big)\leq  e^{-r+7 \lambda}$ for $r>7\lambda$. Hence, $N_2$ can be chosen as a constant times $\lambda=N_1^2$, so the tail bound on $N_2$ can be dominated by a tail bound on $N_1^2$, which is  provided by 	 Proposition~\ref{prop:dbounds} and Lemma~\ref{lem:NTailBounds} (with the choice $\zeta=d$) under suitable  moment conditions on $\omega(e)$ and $\omega(e)^{-1}$. More precisely,
	\[
	\pr(N_2>u)\leq \pr(N_1^2>c u) \leq  c \,  u^{\frac{1-d^2}{2}} \leq c\, u^{2-d}, \qquad u\geq 1,
\]
since  $(1-d^2)/2< 2-d$  for $d\geq 3$, which completes the proof of \eqref{eq:tailN2}.

\smallskip	

Finally, we prove the lower bound of \eqref{eq:AGL}, which follows again from Theorem~\ref{thm:main}.  Choose $K\in(0,\infty)$ such that $\pr(N(x)\leq K)=\pr(N(0)\leq K)\geq 1/2$, then 
\[
\mean\big[g^\omega(x,y) \big]
\geq
\mean \Big[ \int_{N(x)}^\infty p_t^\omega(x,y)\  \textrm{d}t \Big] 
\geq \frac{1}{2}\int_{K}^{\infty}c_6 \, t^{-d/2} \, e^{-c_7 |x-y|^{2} / t}\  \textrm{d}t.
\]
If $\abs{x-y}^2 \geq K$ then we can bound the integral on the right hand side as in the proof of \eqref{eq:GL} to give \eqref{eq:AGL}. 
On the other hand, there are only finitely many vertices $z\in B(0,\sqrt{K})$, and for each such $z$ we have $\E{g^\omega(0,z)}>0$. Therefore, $\inf_{y\in B(x,\sqrt{K})}\E{g^\omega(x,y)}=\inf_{z\in B(0,\sqrt{K})}\E{g^\omega(0,z)}>0$. Thus, 
we can adjust the constant $c_{10}$ such that  \eqref{eq:AGL} also holds for $\abs{x-y}^2 \leq K$. 
\end{proof}

\subsubsection*{Acknowledgement} 
We thank Scott Armstrong for some valuable comments on an earlier version of this paper. N.H.\ has been supported by the doctoral training centre, Cambridge Mathematics of Information (CMI).

\bibliographystyle{abbrv}
\bibliography{bibliography}

\end{document}